\def\ps@pprintTitle{%
 \let\@oddhead\@empty
 \let\@evenhead\@empty
 \def\@oddfoot{}%
 \let\@evenfoot\@oddfoot}
\newtheorem{thm}{Theorem}
\newtheorem{maintheorem}{Theorem}
\newtheorem{prop}[thm]{Proposition}
\newtheorem{lem}[thm]{Lemma}
\newtheorem{coro}[thm]{Corollary}
\newtheorem{conj}[thm]{Conjecture}
\newtheorem{ques}{Question}
\newtheorem{claim}[thm]{Claim}
\theoremstyle{definition}
\newtheorem{constr}[thm]{Construction}
\newcommand{\para}[1]{\medskip\noindent\textbf{#1.}}
\newcommand{\p}[2]{\frac{\partial #1}{\partial #2}}
\newcommand{\TT}{\mathcal{T}}
\newcommand{\Ext}{\mathsf{Ext}}
\numberwithin{thm}{section}
\begin{document}

\title{Generalized bipyramids and hyperbolic volumes of alternating $k$-uniform tiling links}

\begin{abstract}
We present explicit geometric decompositions of the hyperbolic complements of \emph{alternating $k$-uniform tiling links}, which are alternating links whose projection graphs are $k$-uniform tilings of $S^2$, $\mathbb{E}^2$, or $\mathbb{H}^2$. A consequence of this decomposition is that the volumes of spherical alternating $k$-uniform tiling links are precisely twice the maximal volumes of the ideal Archimedean solids of the same combinatorial description, and the hyperbolic structures for the hyperbolic alternating tiling links come from the equilateral realization of the $k$-uniform tiling on $\mathbb{H}^2$. In the case of hyperbolic tiling links, we are led to consider links embedded in thickened surfaces $S_g \times I$ with genus $g \ge 2$ and totally geodesic boundaries. We generalize the bipyramid construction of Adams to truncated bipyramids and use them to prove that the set of possible volume densities for all hyperbolic links in $S_g \times I$, ranging over all $g \ge 2$, is a dense subset of the interval $[0, 2v_\text{oct}]$, where $v_\text{oct} \approx 3.66386$ is the volume of the  ideal regular octahedron.
\end{abstract}

\author[C. Adams]{Colin Adams}
\address[Colin Adams]{Williams College}
\email{colin.c.adams@williams.edu}

\author[A. Calderon]{Aaron Calderon}
\address[Aaron Calderon]{Yale University}
\email{aaron.calderon@yale.edu}

\author[N. Mayer]{Nathaniel Mayer}
\address[Nathaniel Mayer]{University of Chicago}
\email{nmayer26@gmail.com}

\date{\today}

\maketitle

\section{Introduction}
For many links $L$ embedded in a 3-manifold $M$, the complement $M \setminus L$ admits a unique hyperbolic structure. In such cases we say that the link is hyperbolic, and denote the hyperbolic volume of the complement by $\text{vol}(L)$, leaving the identity of $M$ implicit. Computer programs like  SnapPy allow for easy numerical computation of these volumes, but exact theoretical computations are rare.

In \cite{Champanerkar}, Champanerkar, Kofman, and Purcell present an explicit geometric decomposition of the complement of the infinite square weave, the infinite alternating link whose projection graph is the square lattice $\mathcal{W}$ (see Figure \ref{inftweave}) into regular ideal octahedra, one for each square face of the projection. The volume of the infinite link complement is infinite, but it is natural to study instead the \emph{volume density}, defined as $\mathcal{D}(L) = \text{vol}(L)/c(L)$ where $c(L)$ is the crossing number. For infinite links with symmetry like $\mathcal{W}$, we can make this well defined by taking the volume density of a single fundamental domain. Since faces of $\mathcal{W}$ are in bijective correspondence with crossings, the decomposition of \cite{Champanerkar} shows the volume density to be $\mathcal{D}(\mathcal{W}) = v_\text{oct} \approx 3.66386$, the volume of the regular ideal octahedron and the largest possible volume density for any link in $S^3$. 

\begin{figure}[htbp]
\centering
\includegraphics[scale=0.5]{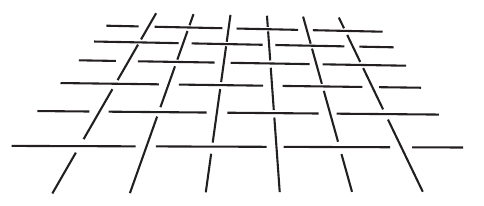}
\caption{The infinite square weave link $\mathcal{W}$, a tiling link derived from the square tiling of the Euclidean plane.}
\label{inftweave}
\end{figure}

In this paper we generalize their example and give explicit geometric decompositions and volume density computations for  \emph{infinite  alternating $k$-uniform tiling links}, alternating links whose projection graphs are edge-to edge tilings by regular polygons of the sphere (embedded in $S^3$), the Euclidean plane (in $\mathbb{R}^3$), or the hyperbolic plane (in $\mathbb{H}^2 \times I$) such that their symmetry groups have compact fundamental domain. (The terminology $k$-uniform refers to the fact there are $k < \infty$ transitivity classes of vertices). In the cases of Euclidean tilings and hyperbolic tilings, we avoid working directly with infinite links by taking a quotient of the infinite link complement by a surface subgroup of the symmetry group. In the case of a Euclidean tiling, we quotient by a $\mathbb{Z} \times \mathbb{Z}$ subgroup and obtain a finite link complement in a thickened torus $T \times (0,1)$. Independently, in \cite{Champanerkar6}, the authors also determine the hyperbolic structures for these alternating tiling links. 

 In the case of a hyperbolic tiling,  we consider thickened higher genus surfaces, denoted $S_g \times I$ where $g$ is the genus. We restrict to orientable surfaces until the very end.
 When the specific genus is unimportant, we use $S \times I$ to denote any thickened surface of genus $g \ge 2$.
 
 The crossing number of a link in $S \times I$ is defined as the minimal number of crossings across all possible projections onto $S \times \{0\}$. It follows from  \cite{AFLT02} and\cite{Fleming} that for an alternating link in $S \times I$, $c(L)$ is realized only for a reduced alternating projection of $L$ onto $S \times \{0\}$. We assume throughout that the two boundary components of $S \times I$ in the hyperbolic structure on $(S \times I)  \setminus L$ are totally geodesic, and hence the metric is uniquely determined (see Theorem \ref{uniquetotgeostructure}).  

Our main theorem relates the geometry of the link complement $(S \times I) \setminus L$ to the geometry and combinatorics of the tiling.

\begin{maintheorem}[c.f. Theorem \ref{computingvolumes}]\label{mainthm}
The hyperbolic structure on a alternating $k$-uniform  tiling link is realized by a polyhedral decomposition whose dihedral angles are specified by the angles of (the equilateral realization of) the corresponding tiling.
\end{maintheorem}
  
This analysis of alternating $k$-uniform tiling links also yields results about general hyperbolic link complements in $T \times (0,1)$ and $S \times I$.

Generalizing a result of D. Thurston \cite{Thurston2}, we show in Corollary \ref{2voctdensity} that volume density for links in $S \times I$ is bounded above by $2v_\text{oct}$.  
This turns out to be the only restriction on the spectrum of volume densities, for we show that

\begin{maintheorem}[c.f. Theorem \ref{2voctdensity}]\label{mainthm:density} 
The set of volume densities of links in $S_g \times I$ across all genera $g \ge 2$, assuming totally geodesic boundary, is a dense subset of the interval $[0, 2v_\text{oct}]$.
\end{maintheorem}

As a corollary to the proof, we have that volume densities for links in the thickened torus $T \times (0,1)$  are  dense in the interval $[0,v_\text{oct}]$. This extends previous results that volume densities are dense in $[0,v_\text{oct}]$ for knots in $S^3$ from \cite{Burton2015} and \cite{Adams2017}. 


\para{Outline}
In Section \ref{sec:tilinglinks}, we introduce the main objects of our study and explain how certain tilings of $\mathbb{S}^2$, $\mathbb{E}^2$, or $\mathbb{H}^2$ give rise to alternating links in thickened surfaces. After introducing this class of links, we show in Section \ref{subsec:hypstr} that their complements admit a unique hyperbolic structure (with totally geodesic boundary when applicable). In Section \ref{subsec:decomp} we give topological decompositions of these link complements, generalizing both the octahedral decomposition of \cite{Thurston2} and the bipyramidal decomposition of \cite{Adams2015}.

In Section \ref{sec:polyhedra}, we present the relevant background on generalized hyperbolic polyhedra, and use this opportunity to investigate maximal volume tetrahedra and bipyramids. This yields bounds on the volumes of link complements in thickened surfaces in terms of the combinatorics of their link diagrams.

After determining explicit hyperbolic structures on these building blocks, in Section \ref{sec:volumes} we prove that the symmetry of the tiling forces symmetry of the corresponding link complement.
In particular, this symmetry allows us to explicitly determine the dihedral angles of the polyhedra in the decompositions of Section \ref{sec:polyhedra}, leading to our exact computation of volumes and the proof of Theorem \ref{mainthm}. 

Finally, in Section \ref{sec:density} we prove Theorem \ref{mainthm:density}, which shows that the relationship between volume and crossing number is no stronger than the bound established in Section \ref{sec:polyhedra}. The main tools here are covering spaces and a bound on the volumes of hyperbolic Dehn fillings due to Futer, Kalfagianni, and Purcell \cite{Futer08}.

\para{Acknowledgments}
This research began at the SMALL REU program in the summer of 2015, and we are very grateful for our teammates Xinyi Jiang, Alexander Kastner, Greg Kehne and Mia Smith for their hard work and helpful conversations along the way. We are also grateful to Yair Minsky for helpful comments, as well as Abhijit Champanerkar, Ilya Kofman, and Jessica Purcell for their useful input.
This paper owes much to anonymous referees who gave us particularly detailed and helpful feedback on earlier versions of the paper.
The authors were supported in part by NSF grant DMS-1659037, the Williams College SMALL REU program, and NSF graduate fellowship DGE-1122492.

\section{Alternating k-Uniform Tiling Links}\label{sec:tilinglinks}

In this section, we introduce {\em alternating $k$--uniform tiling links}, the main objects of our study in this paper (Construction \ref{constr:tilinglink}). These correspond to links in thickened surfaces, and by a criterion of \cite{Aetal18} (Theorem \ref{thm:alternatinghyp}), correspond to links with hyperbolic exteriors. As a consequence of Mostow--Prasad rigidity, the hyperbolic structure on the link exterior is unique (Theorem \ref{uniquetotgeostructure}), so long as the boundary surfaces are assumed to be totally geodesic for genus at least 2.

After defining tiling links, we introduce certain (topological) polyhedral decompositions of their exteriors (Lemmas \ref{lem:octdecomp} and \ref{lem:bipdecomp}). These decompositions generalize D. Thurston's octahedral decomposition \cite{Thurston2} as well as Adams's bipyramid construction \cite{Adams2015}. The rest of the paper is then devoted to determining the exact geometric structure corresponding to these decompositions.

\subsection{Links from tilings}

The goal of this section is to provide a method to produce, given a suitable tiling, an alternating link in a thickened surface (Construction \ref{constr:tilinglink}). The tiling becomes an alternating link diagram, with vertices corresponding to crossings and edges to arcs between them.

Recall that a tiling $\mathcal{T}$ of $\mathbb{X} \in \{\mathbb{S}^2, \mathbb{E}^2, \mathbb{H}^2 \}$ is said to be {\it $k$-uniform} if it consists of regular polygonal tiles meeting edge-to-edge such that there are $k$ transitivity classes of vertices and a compact fundamental domain for the symmetry group of the tiling. Note that the assumption of compact fundamental domain forces each tile itself to be compact. By the assumption of $k$--uniformity, $\mathcal{T}$ induces a tiling of a compact surface.

\begin{lem}\label{lem:tilingonsurf}
Let $\mathcal{T}$ be a $k$--uniform tiling of
$\mathbb{X} \in \{\mathbb{S}^2, \mathbb{E}^2, \mathbb{H}^2 \}$.
Then there exists a compact quotient $S = \mathbb{X}/\Gamma$ of $\mathbb{X}$ such that $\mathcal{T}$ descends to a tiling of $S$.
\end{lem}
\begin{proof}
When $\mathbb{X} = \mathbb{S}^2$, the statement is vacuously true with $\Gamma = \{1\}$.

Otherwise, let $G_\mathcal{T} \le \text{Isom}_+(\mathbb{X})$ be the orientation--preserving symmetry group of $G_\mathcal{T}$. By the assumption of $k$--uniformity, the action of $G_\mathcal{T}$ has compact fundamental domain.

Therefore, by Selberg's lemma, $G_\mathcal{T}$ has a torsion--free normal subgroup $\Gamma$ of finite index. Since $\Gamma$ is torsion free and acts cocompactly on $\mathbb{X}$, the quotient $S= \mathbb{X}/\Gamma$ is a compact surface of genus $g \ge 1$ (depending on whether $\mathbb{X} = \mathbb{E}^2$ or $\mathbb{H}^2$).
\end{proof}

When $\mathcal{T}$ is 4--regular (i.e each vertex meets four tiles), any choice of over-- or under--crossing at each vertex will result in a link diagram on $S$. In order to get link diagrams from 3--regular tilings, we will turn some edges into {\em bigon tiles} to make each vertex 4--regular. Eventually, bigon tiles become bigons in the link diagram (see, e.g., Figure \ref{Fig:euclidtilingexample}).

\begin{lem}\label{lem:make4reg}
Any $3$--regular tiling of a compact surface can be made into a 4--regular tiling with bigon tiles.
\end{lem}
\begin{proof}
View the tiling $\mathcal{T}$ as a graph embedded on the surface. Since the graph is 3--regular and every edge bounds a tile (and hence belongs to a cycle), Petersen's Theorem implies that there exists a perfect matching (see, \emph{e.g.} \cite{Petersen}, \cite{Brahana}). That is to say, there is a subcollection $E$ of the edges of $\mathcal{T}$ such that each vertex is an endpoint of exactly one edge of $E$.

Replacing each edge in the subcollection with a bigon yields a 4--regular tiling of $S$.
\end{proof}

We note that if a tiling of $S$ has a mixture of 3--valent and 4--valent vertices such that there is a collection of edges that pair the 3--valent vertices, then we can replace those edges with bigons to obtain a 4--regular tiling with bigon tiles.

As above, any choice of over-- or under--crossing at each vertex of a 4--regular tiling with bigon tiles results in a link diagram on a surface. To ensure that the diagram is alternating, however, we may need to pass to a finite cover of $S$. This is the content of the next lemma.

We say that a tiling with bigon tiles can be {\em checkerboard shaded} if its tiles (including the bigons) can be colored black and white so that no two adjacent tiles have the same color.

\begin{lem}\label{lem:checkers}
Let $\mathcal{T}$ be a 4--regular tiling (possibly with bigon tiles) on a surface $S$. Then there exists some finite cover $S'$ of $S$ so that the lifted tiling $\mathcal{T}'$ of $S'$ can be checkerboard shaded.
\end{lem}
\begin{proof}
In order to determine the cover, we will shade $\mathcal{T}$ and then lift to a cover to make our shading a checkerboard.

For each tile $D$ of $\mathcal{T}$, choose a point $x_D$ living in the interior and pick a basepoint $x_0$ from among the $x_D$.
For each $D$, choose a path $\gamma_D$ from $x_0$ to $x_D$ which avoids the vertices of $\mathcal{T}$.

If  $\gamma_D$ intersects $\mathcal{T}$ an even number of times, color $D$ black, and if an odd number of times, color $D$ white.
As $\mathcal{T}$ is 4--regular, the parity of the number of intersections of $\gamma_D$ with the edges of $\mathcal{T}$ is homotopy invariant, allowing homotopies (rel $x_0$ and $x_D$) which pass over a vertex.

The construction above yields a shading of $\mathcal{T}$, but it may not be checkerboard. If it is not, then there are two adjacent tiles $D$ and $D'$ which are colored the same way.

In that case, let $\delta$ denote a path from $x_D$ to $x_{D'}$ which meets $\mathcal{T}$ only once. Then $\gamma_D \circ \delta \circ \gamma_{D'}^{-1}$ is a loop from $x_0$ to itself which by construction meets $\mathcal{T}$ an odd number of times.

More generally, the tiling $\mathcal{T}$ defines a homomorphism
\[i:\pi_1(S, x_0) \rightarrow \mathbb{Z}/2\mathbb{Z}\]
which records whether a loop meets $\mathcal{T}$ an odd or even number of times.

Taking $S'$ to be the cover corresponding to the kernel of $i$ and lifting $\mathcal{T}$, we see that any loop in $\pi_1(S', x_0')$ meets $\mathcal{T}'$ an even number of times, and hence the shading of $\mathcal{T}$ lifts to a checkerboard shading of $\mathcal{T}'$.
\end{proof}

With the above preliminaries, we can now show how certain tilings of $\mathbb{X}$ give rise to links in thickened surfaces.

\begin{constr}[Tiling links]\label{constr:tilinglink}
Let $\mathcal{T}$ be a $k$--uniform tile that is either 3-- or 4--regular, or has a mix of 3-- and 4--valent vertices such that there exists a collection of edges which pairs the 3--valent vertices.

Let $\mathbb{X}/\Gamma$ be the compact surface tiled by $\mathcal{T}$, as guaranteed by Lemma \ref{lem:tilingonsurf}. If necessary, double edges as in Lemma \ref{lem:make4reg} to turn $\mathcal{T}$ into a 4--regular tiling, possibly with bigon tiles. Finally, lift $\mathcal{T}$ to a tiling of the finite cover $S$ of $\mathbb{X} / \Gamma$ produced by Lemma \ref{lem:checkers}.

We may now resolve the vertices of $\mathcal{T}$ into over-- and under--crossings so that the resulting link diagram is alternating. For each white tile of $\mathcal{T}$, we stipulate that the strands on the boundary go from under a crossing to over a crossing as we travel clockwise around the boundary. For the black tiles, the strands on the boundary go from under a crossing to over a crossing as we travel counterclockwise around the boundary. This yields a consistent way to choose crossings so the resultant link diagram is alternating.

We can now treat the link as living in $M = S_g \times I$. In the case of a spherical tiling, we cap off the two spherical boundaries with balls to obtain a link $L$ in $M' = S^3$.
In the case of a Euclidean tiling, the link lives in $M' = T \times (0,1)$. In the case of a hyperbolic tiling, $S_g$ has genus at least 2 and the link lives in $M' = M = S_g \times I$. 
 
 We call such a link $L$ in $M'$ an {\it alternating $k$-uniform tiling link}. We can then lift the choice of crossings back to the link in $\mathbb{E}^2 \times (0,1)$ or $\mathbb{H}^2 \times I$ to obtain the corresponding {\it infinite alternating $k$-uniform tiling link.}
 \end{constr}
 
In the sequel, we often suppress the alternating and $k$--uniform descriptors and call any link resulting from the above construction a {\em tiling link}. We use $\Ext(L)$ to denote the complement of $L$ in $M'$.

 Note that there are multiple ways to resolve a $k$-uniform tiling into an infinite alternating $k$-uniform tiling links due to our choice of how to put in a crossing at a vertex, to where we choose to add bigons and to the size of the fundamental domain we choose. For $g >0$, by taking larger and larger fundamental domains, the number of possible links grows exponentially. For example, the 6.6.6 tiling has an infinite number of ways we may add bigons to make it a $4$--regular graph, all of which respect some $\mathbb{Z}^2$-subgroup of the symmetry group of the tiling. (The notation $p.q.r. \dots.s$ means that around every vertex, we have in consecutive order a $p$-gon, a $q$-gon, an $r$-gon,$\dots$ and an $s$-gon). Our results hold independently of the choice of such bigons.

 The alternating $k$-uniform tiling links derived from spherical tilings correspond precisely to Archimedean polyhedra. Although we include that case here, note that in \cite {AR}, the authors classified exactly the alternating hyperbolic links corresponding to the 3--regular and 4--regular Archimedean solids.

\begin{figure}[htbp]
\centering
\begin{subfigure}[b]{.48 \textwidth}
\centering
\includegraphics[scale=1]{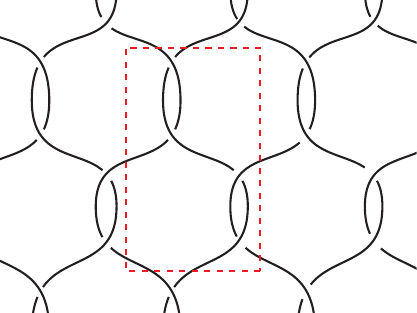}
\caption{A tiling link derived from the 6.6.6 tiling of $\mathbb{E}^2$.}
\label{Fig:euclidtilingexample}
\end{subfigure}
\begin{subfigure}[b]{.48 \textwidth}
\centering
\includegraphics[scale=.2]{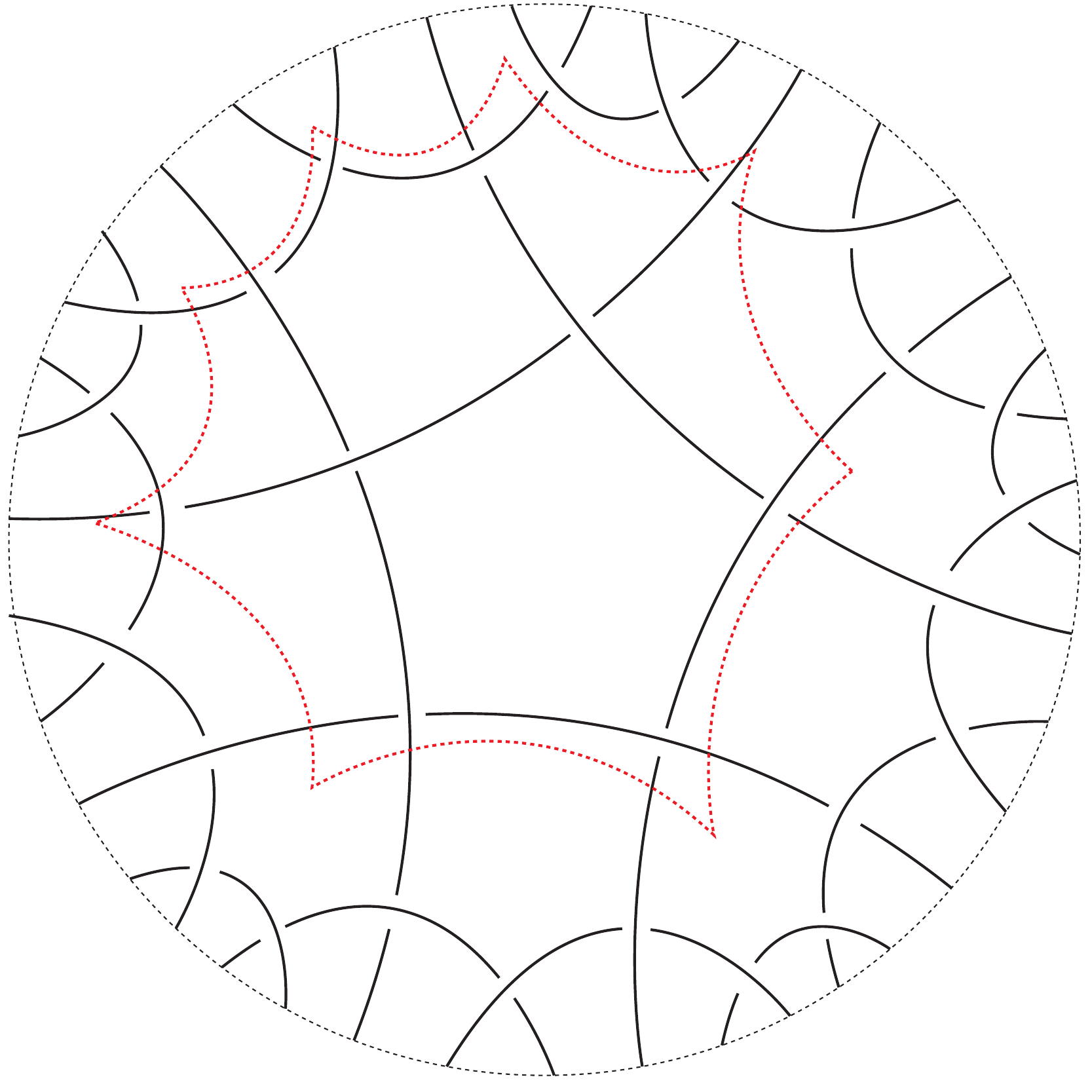}
\caption{A tiling link derived from the 5.5.5.5 tiling of $\mathbb{H}^2$.}
\label{Fig:hyptilingexample}
\end{subfigure}
\caption{Tiling links shown with fundamental domains for appropriate surface groups.}
\label{Fig:tilingexamples}
\end{figure}

\subsection{Hyperbolic structures on tiling link exteriors}\label{subsec:hypstr}

In this subsection we show that when $g \ge 2$, the complement of an alternating $k$-uniform tiling link in $S_g \times I$ can be given a  unique hyperbolic metric such that both $S_g \times \{0\}$ and $S_g \times \{1\}$ are totally geodesic. We begin with a well-known fact, but include a proof for completeness.

\begin{thm}\label{uniquetotgeostructure} A finite volume anannular hyperbolic  3-manifold with boundary has exactly one finite-volume complete hyperbolic structure with totally geodesic boundary.
\end{thm}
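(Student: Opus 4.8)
The plan is to reduce the uniqueness statement for $M$ to Mostow--Prasad rigidity for its double. First I would fix a complete finite-volume hyperbolic structure on $M$ with totally geodesic boundary and form the double $DM = M \cup_{\partial M} M$, gluing two copies of $M$ by the identity on $\partial M$. Because the boundary is totally geodesic, reflection across it is a local isometry, so the doubled metric extends smoothly across the gluing locus; thus $DM$ carries a complete finite-volume hyperbolic metric (the cusps of $M$, if any, simply double). This is where the anannular hypothesis enters: an essential annulus in $M$ would double to an essential torus in $DM$, so ``anannular'' is exactly what guarantees $DM$ is atoroidal and that the construction produces a genuine hyperbolic manifold rather than one containing essential tori.

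Next I would record the symmetry of the double. By construction $DM$ admits an orientation-reversing involution $\tau$ that interchanges the two copies of $M$ and fixes $\partial M$ pointwise; for the doubled metric this $\tau$ is an isometry with $\mathrm{Fix}(\tau) = \partial M$ a totally geodesic surface, and $M$ is recovered as the metric completion of one component of $DM \setminus \mathrm{Fix}(\tau)$. The engine of the proof is then Mostow--Prasad rigidity: any two complete finite-volume hyperbolic structures on $DM$ are isometric by an isometry isotopic to the identity.

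To finish, suppose $g_1, g_2$ are two complete finite-volume hyperbolic structures on $M$ with totally geodesic boundary, and let $Dg_1, Dg_2$ be their doubles on $DM$. By rigidity there is an isometry $\phi\colon (DM, Dg_1) \to (DM, Dg_2)$ isotopic to the identity. The conjugate $\phi \tau \phi^{-1}$ is then an isometry of $(DM, Dg_2)$ that is isotopic to $\tau$ (here $\tau$ denotes the reflection, which is an isometry of both $Dg_1$ and $Dg_2$); since $\tau$ is also an isometry of $(DM, Dg_2)$ and isometries of a finite-volume hyperbolic manifold are determined by their homotopy class, we get $\phi \tau \phi^{-1} = \tau$. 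Hence $\phi$ commutes with $\tau$, carries $\mathrm{Fix}(\tau)$ to itself and one half of $DM$ to the corresponding half, and therefore restricts to an isometry $(M, g_1) \to (M, g_2)$. This proves uniqueness.

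The step I expect to be the main obstacle is the equivariance argument in the last paragraph --- namely, guaranteeing that the rigidity isometry between the two doubles can be taken to commute with the reflection $\tau$, so that it genuinely restricts to an isometry of $M$ rather than merely relating the two doubled manifolds. This rests on the uniqueness-up-to-isotopy refinement of Mostow--Prasad (homotopic isometries of a finite-volume hyperbolic manifold coincide), together with the observation that $\tau^2 = \mathrm{id}$ forces its isometric representative to remain an involution with totally geodesic fixed set.
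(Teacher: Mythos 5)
Your proposal is correct and follows essentially the same route as the paper: the paper's (very brief) argument is exactly to double $M$ along its totally geodesic boundary, invoke Mostow--Prasad rigidity for the resulting finite-volume manifold, and use the fact that the orientation-reversing homeomorphism fixing the boundary is realized as an isometry. Your write-up simply fills in the details the paper leaves implicit, including the equivariance step ($\phi\tau\phi^{-1}=\tau$ via uniqueness of isometries in a homotopy class) and the role of the anannular hypothesis in keeping the double atoroidal.
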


\begin{proof}Let $M$ be an anannular  finite volume hyperbolic 3-manifold with boundary components of genus greater than 1. By doubling the manifold along its boundaries, we obtain a manifold $DM$ with no boundary and a finite number of toroidal cusps. Because $M$ is anannular and hyperbolic, and therefore also atoroidal, irreducible and boundary-irreducible, $DM$ must also be anannular, atoroidal, irreducible and boundary-irreducible. By Thurston's geometrization theorem for Haken manifolds (see \cite{Thurston1982}), $DM$ admits a complete finite volume hyperbolic structure, and the Mostow--Prasad Rigidity Theorem implies this structure is unique.  The fact $DM$ has an orientation-reversing involution with fixed point set the boundary of $M$ implies by Mostow--Prasad rigidity that the boundary of $M$ will be realized as totally geodesic surfaces in $DM$ (see for instance Lemma 1 of \cite {AR92}.) 
\end{proof}

 For link complements in $S_g \times I$, with genus $g > 1$, we concern ourselves exclusively with the case where $S_g \times \{0\}$ and $S_g \times \{1\}$ are totally geodesic. This satisfies the hypothesis of Theorem \ref{uniquetotgeostructure} to ensure that a complete hyperbolic structure of finite volume is unique if it exists.  Throughout this section, we often suppress the genus and refer to any thickened surface of genus $g \ge 2$ by $S \times I$. A link projection on a surface is {\it fully alternating} if it is alternating and all complementary regions are disks. (This is sometimes called a cellular embedding in the literature).
 We utilize two theorems from \cite{Aetal18}.

\begin{thm}\cite{Aetal18}
\label{prime}A reduced fully alternating link $L'$ on a surface  $S$  is prime if and only if there does not exist a disk $D$ on $S$ such that $\partial D$ intersects $L'$ twice transversely and there are crossings in $D\cap L'$.
\end{thm}

\noindent When there are no such disks, we say $(S,L')$ is \textit{obviously prime}. Thus the theorem says that a reduced fully alternating link in a thickened surface is prime if and only if it is obviously prime. 

\begin{thm}\cite{Aetal18}
\label{tg}A prime fully alternating link in a thickened orientable surface of genus at least one is hyperbolic, and if $g \geq 2$, the boundary surfaces can be taken to be totally geodesic. 
\end{thm}

We can then prove the following theorem.

 \begin{thm} For $g \ge 2$, the complement of an alternating $k$-uniform tiling link in $S_g \times I$ has a hyperbolic metric with totally geodesic boundaries.
 \label{thm:alternatinghyp}
 \end{thm}
 
 \begin{proof}
  By construction, an alternating $k$-uniform tiling link in $S_g \times I$ has a connected projection that is fully alternating, meaning that the complementary regions are all disks, and that the link alternates its crossings as we travel along any and all components.
  
  Furthermore, there can be no circle bounding a disk on the surface such that it intersects the link projection twice and such that there are crossings inside the disk. If there were such a disk, it would correspond to such a disk in the tiling of $\mathbb{H}^2$, which is impossible for a tiling consisting of regular polygons. Thus, we have an obviously prime reduced fully alternating link projection in $S_g$.
  
  Theorem \ref{prime} implies the link is prime and Theorem \ref{tg} then implies the link complement is hyperbolic (with totally geodesic boundary if $g \geq 2$).
 \end{proof}
 
 Note that in \cite{Aetal18}, a hyperbolic 3--manifold $M$ such that all boundaries of genus at least 2 are totally geodesic is called a {\it tg-hyperbolic 3--manifold}.

\subsection{Polyhedral decompositions of tiling link complements}\label{subsec:decomp}

In this section, we recall some topological decompositions of (classical) link complements and discuss their generalizations to tiling links. In Section \ref{sec:volumes}, we realize these decompositions geometrically to determine the hyperbolic structure on the exterior.

In \cite{Thurston2}, D. Thurston described a decomposition of the complement of a link in $S^3$ into octahedra, using one at each crossing, as in Figure \ref{octahedratobipyramids}(a). These octahedra have two ideal vertices located on the cusps at that crossing and four finite vertices which are identified in pairs to two finite points. The two points are thought of as being far above and below the projection sphere for the link, and we denote them $U$ and $D$ (for up and down). Any such octahedron has volume less than $v_\text{oct}$, so this gives an upper bound on the volume of the link: 
\[\text{vol}(L) \leq v_\text{oct}c(L).\]
There exist links such that $\text{vol}(L) /c(L)$ asymptotically approaches $v_{oct}$. (cf. \cite{Champanerkar}, \cite{Champanerkar2}).

In \cite{Adams2015}, the octahedral decomposition was rearranged into face-centered bipyramids. Thurston's octahedra are cut open along the core vertical line connecting the ideal vertices, yielding four tetrahedra as in Figure \ref{octahedratobipyramids}(b). The tetrahedra each have two ideal and two finite vertices, identified with $U$ and $D$. The edge connecting the finite vertices passes through the center of one of the four faces adjacent to the crossing for that octahedron, as shown in Figure \ref{octahedratobipyramids}. This edge is shared by one tetrahedron from each crossing bordering that face, which glue together to form a bipyramid. The apexes of the bipyramids are the finite vertices $U$ and $D$, while the vertices around the central polygon are ideal. Thus we can think of the link complement as decomposing into bipyramids, one per face of the link projection, each such $n$-bipyramid corresponding to a face of $n$ edges.

Any such $n$-bipyramid has volume less than the maximal volume ideal $n$-bipyramid, shown in \cite{Adams2015} to be regular, with volume bounded above by and asymptotically approaching $2\pi\log(n/2)$. These bipyramids are denoted $B_n^\text{ideal}$. The construction puts an upper bound on volume
\[\text{vol}(L) < \sum_i \text{vol}(B_{n_i}^\text{ideal})\]
where $n_i$ denotes the number of edges in the $i$-th face of the projection of $L$.
For link projections whose faces have many edges, this gives a dramatically better upper bound on volume than $v_{oct} c(L)$, which is the bound generated directly from Thurston's octahedra.

\begin{figure}[htbp]
\centering
\includegraphics[width = 0.4\textwidth]{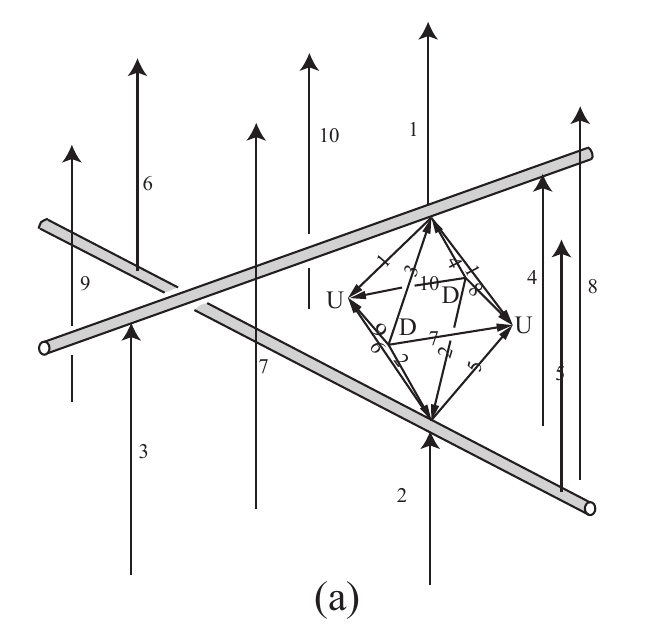}
\qquad
\includegraphics[width = 0.4\textwidth]{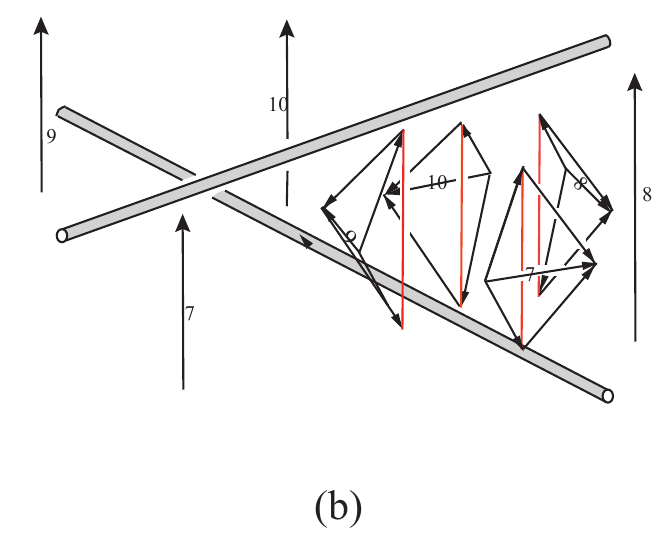}
\caption{Cutting up octahedra to reassemble into face-centered bipyramids.}
\label{octahedratobipyramids}
\end{figure}

We now generalize both the octahedral and bipyramidal decompositions to links in thickened surfaces. Once we have analyzed the maximal volume generalized octahedra and bipyramids, these decompositions will give us analogous bounds on volumes (Corollaries \ref{cor:octvolbd} and \ref{cor:bipvolbd}, respectively).

\begin{lem}\label{lem:octdecomp}
Let $L$ be a link in either $M = T^2 \times (0,1)$ or $M = S_g \times I$ and suppose that $L$ has crossing number $c(L)$. Then $\Ext(L)$ can be (topologically) decomposed into $c(L)$ generalized octahedra, each of which has two vertices on $L$ and all of its other vertices either ideal (if $M = T^2 \times (0,1)$) or truncated ($M = S_g \times I$).
\end{lem}

\begin{proof}
Let $S$ be either $T^2$ or $S_g$, as appropriate.
Recall that the {\em suspension} $\hat{S}$ of a topological space $S$ is the quotient
\[ \hat{S} = (S \times [0,1]) / (S \times \{0\}, S \times \{1\}).\] 
We label the point of $\hat{S}$ corresponding to $S \times \{0\}$ by $D$, and the point corresponding to $S \times \{1\}$ by $U$.

If $M = T^2 \times (0,1)$, we realize $M$ as $S \times (0,1)$ inside of $\hat{S}$, while if $M = S_g \times I$ then realize it as $S_g \times [1/3,\, 2/3]$. This realization extends to an embedding of the link $L$, and therefore we have an embedding $\Ext(L)$ into $\hat{S} \setminus L$. 

We now give an octahedral decomposition of $\hat{S} \setminus L$ and intersect it with $\Ext(L)$. To decompose $\hat{S} \setminus L$ into octahedra, we simply parallel Thurston's proof. Placing an octahedron at each crossing, we may identify the non-ideal vertices to either $U$ or $D$, and identify the faces of the octahedra as in \cite{Thurston2}, yielding a decomposition of $\hat{S} \setminus L$ in which all of the non-link vertices are identified with either $U$ or $D$.

We observe that in the classical setting of a knot diagram on $S^2$, this returns Thurston's original octahedral decomposition of a knot in $S^3 = \widehat{S^2}$.

Now take the intersection of $\Ext(L)$ with this decomposition. If the base surface has genus one, then the intersection of each octahedron with $\Ext(L)$ will be an ideal octahedron (since we excise $U$ and $D$ to get $\Ext(L)$), while if the base surface is of higher genus then we excise neighborhoods of $U$ and $D$, giving truncated octahedra.
\end{proof}
 
 \begin{lem} \label{lem:bipdecomp} Let $L$ be a link in  $M = S \times I$.  Assume  $M' \setminus L$ is hyperbolic. Let $\{F_i\}$ be the collection of $m$ non-bigon faces complementary to the projection of $L$ to $S$ and let $n_i$ be the number of edges in the $i$th face. Then topologically, $\Ext(L)$ can be decomposed into $m$ face-centered bipyramids, each corresponding to a unique face $F_i$ with $n_i$ equatorial edges. When $g = 0$, the apexes of the bipyramids are finite. When $g= 1$, the apexes are ideal. When $g \geq 2$, the apexes are truncated, to yield the two boundary surfaces $S \times \{0\}$ and $S \times \{1\}$.
\end{lem}

\begin{proof} We first consider the spherical case, when $g = 0$. In \cite{Menasco} and implicitly in \cite{ThurstonNotes}, a method is given to decompose a link complement in $S^3$ into two combinatorially equivalent ideal polyhedra, with the gluings on the corresponding faces provided to reproduce the link complement. The faces are complementary to the projection of the link, except that bigons are collapsed to edges. Note that the faces must rotate appropriately before each gluing to obtain the link complement. See \cite {Menasco} for details.

Choosing a single interior point in each polyhedron and coning it to the boundary allows us to then decompose each of the two polyhedra into a collection of pyramids, one corresponding to each face on the projection sphere. Then, gluing together the base of one pyramid from polyhedron 1 with the pyramid with base sharing the corresponding face from pyramid 2 yields the collection of bipyramids that gives the requisite decomposition.

In the case of $g = 1$, we can use the same decomposition of the complement, but instead of two polyhedra, we use two copies $W_1 $ and $W_2$ of $T \times [0,1)$. In this case, as in \cite{Menasco} and \cite{ThurstonNotes}, we obtain a graph with ideal vertices on the two boundaries $T \times \{0\}$ such that when pairs of corresponding faces are appropriately identified, we obtain $M' \setminus L$. For each face $F_i$ on $\partial W_1$ and corresponding face $F_i'$  on $\partial W_2$, we take $F_i \times [0,1) \subset W_1$ and $F_i' \times [0,1) \subset W_2$ and glue them together appropriately along $F_i \times \{0\}$ and $F_i' \times \{0\}$, to obtain a bipyramid,  the ideal apexes of which correspond to $F_i \times \{1\}$ and  $F_i' \times \{1\}$. Then these ideal bipyramids glue together to yield the link complement.

In the case of $g \geq 2$, the argument is identical, only now we use two copies $W_1$ and $W_2$ of $S \times I$. Then for each pair of corresponding faces $F_i$ and $F_i'$, we obtain a truncated bipyramid, such that the truncation faces from the collection of bipyramids glue together to yield the two boundaries of $M'$. 
\end{proof}

Note that we could also have proven Lemma \ref{lem:bipdecomp} by arguing as in \cite{Adams2015} that each truncated octahedron from Lemma \ref{lem:octdecomp} can be decomposed into four pieces and the pieces can be reassembled to obtain the face-centered bipyramids.

\section{Hyperbolic polyhedra}\label{sec:polyhedra}

This section is devoted to the investigation of hyperbolic polyhedra and the relationship of their volumes with their dihedral angles. In Theorems \ref{thm:Ushijima} and \ref{thm:Schlafli}, we recall some well-known formulas for the volume of hyperbolic tetrahedra, which we then use in Proposition \ref{prop:maximalWedge} to find the maximal volume tetrahedron with certain constraints. From this, we deduce results about the maximal volume of certain generalized octahedra and bipyramids (Corollaries \ref{2v8ismaximaloct} and \ref{maximalBipyramid}, respectively), which in turn yield bounds on the volume of links in $S \times I$ in terms of crossing number (Corollaries \ref{cor:octvolbd} and \ref{cor:bipvolbd}).

\subsection{Generalized tetrahedra}

We begin by recalling the definition of hyperbolic polyhedra with ultra--ideal vertices. For a more thorough discussion, the reader is advised to consult \cite{Ushijima2006}.

A \emph{generalized hyperbolic tetrahedron} is the convex hull of four points which may be \emph{finite} (within $\mathbb{H}^3$), \emph{ideal} (on $\partial\mathbb{H}^3$), or \emph{ultra-ideal} (outside $\mathbb{H}^3 \cup \partial \mathbb{H}^3$).

Ultra-ideal points fit most naturally into the Klein ball model of $\mathbb{H}^3$: given an ultra-ideal point $x$ outside the unit sphere in $\mathbb{R}^3$, consider the cone of lines through $x$ tangent to the sphere. The {\it canonical truncation plane} associated to $x$ is the plane containing the circle where that cone intersects the sphere. Geodesic lines and planes through the ultra-ideal point are computed as Euclidean lines and planes through the point as usual, cut off at the truncation plane (see Figure \ref{ultraidealtruncation}). Note that all edges of the tetrahedron that passed through $x$ before truncation are perpendicular to the corresponding truncation plane.

We will often prefer to work in the Poincar\'e ball model for computing lengths and angles. Since the two models agree on the sphere at infinity, we can do this by using the Klein model to locate the ideal boundaries of geodesic lines and planes, and then construct the geodesics corresponding to those boundaries in the Poincar\'e model.

\begin{figure}[htbp]
\centering
\includegraphics[scale = .5]{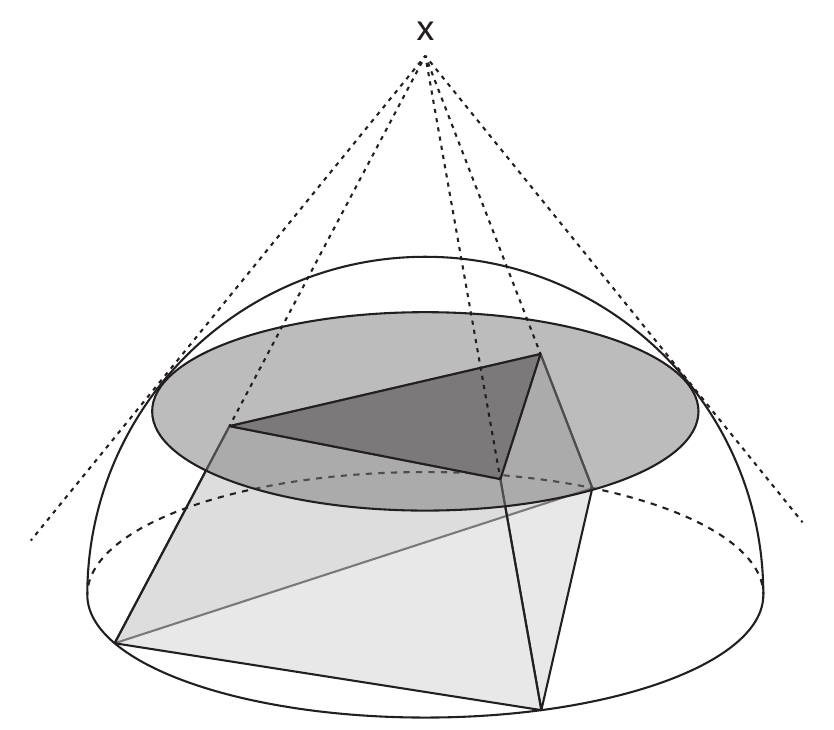}
\caption{The canonical truncation of an ultra-ideal point in the Klein ball model.}
\label{ultraidealtruncation}
\end{figure}

A generalized hyperbolic tetrahedron is fully determined by its six dihedral angles (\cite{Ushijima2006}). We can thus specify a tetrahedron with a vector $\Delta = (A, B, C, D, E, F) \in [0,\pi]^6$, with the dihedral angles labelled as in Figure \ref{fig:LabelledTetrahedron}. We restrict our attention to \emph{mildly truncated} tetrahedra, those in which truncation planes for distinct ultra-ideal vertices do not intersect within $\mathbb{H}^3$. It should be understood that all truncated tetrahedra in this paper are mildly truncated.

\begin{figure}[htbp]
\centering
\includegraphics[width = .4\textwidth]{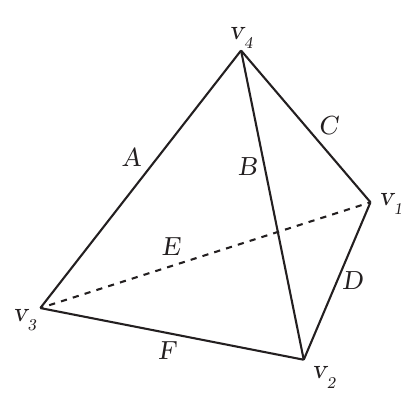}
\caption{A tetrahedron with vertices and dihedral angles labeled.}
\label{fig:LabelledTetrahedron}
\end{figure}

When the generalized tetrahedron is mildly truncated, there is a formula for its volume in terms of its dihedral angles.

\begin{thm}[\cite{Ushijima2006}]\label{thm:Ushijima}
Given a generalized hyperbolic tetrahedron $\Delta$ with dihedral angles as in Figure \ref{fig:LabelledTetrahedron}, let
\[a = e^{iA},\; b = e^{iB},\; ...,\; f = e^{iF},\]
\[G = 
\begin{pmatrix}
1 & -\cos A & -\cos B & -\cos F \\
-\cos A & 1 & -\cos C & -\cos E\\
-\cos B & -\cos C & 1 & -\cos D \\
-\cos F & -\cos E & -\cos D & 1
\end{pmatrix},\]
\[z_1 = -2 \frac{\sin A \sin D + \sin B \sin E + \sin C \sin F - \sqrt{\det G}}{ad + be + cf + abf + ace + bcd + def + abcdef},\]
\[z_2 = -2 \frac{\sin A \sin D + \sin B \sin E + \sin C \sin F + \sqrt{\det G}}{ad + be + cf + abf + ace + bcd + def + abcdef},\]
\[U(z, \Delta) = \frac{1}{2}(\text{Li}_2(z) + \text{Li}_2(abdez) + \text{Li}_2(acdfz) + \text{Li}_2(bcefz) - \text{Li}_2(-abcz) - \text{Li}_2(-aefz) - \text{Li}_2(-bdfz) - \text{Li}_2(-cdez)),\]
where $\text{Li}_2(z)$ is the dilogarithm function, defined by the analytic continuation of the integral
\[ \text{Li}_2(x) = - \int_0^x \frac{\log(1-t)}{t}\,dt\]
for $x \in \mathbb{R}_{>0}$. Then the volume of $\Delta$ is given by
\[\emph{vol}(\Delta) = \frac{1}{2} \text{Im}(U(z_1, \Delta) - U(z_2, \Delta)).\]
\end{thm}

The generalized volume formula in terms of the angles is unwieldy, but the total differential takes an elegant form in terms of the edge lengths. This is Schl\"afli's differential formula, in the three-dimensional case (see, \emph{e.g.} \cite{Schlafli}, \cite{Kellerhals}).
\begin{thm}[Schl\"afli's differential formula]\label{thm:Schlafli}
Given a generalized hyperbolic tetrahedron $\Delta$ with dihedral angles $\alpha_i$ and corresponding edge lengths $\ell_i$, the differential of the volume function is given by
\[d\emph{vol}(\Delta) = -\frac{1}{2} \sum_{i=0}^6 \ell_i \,d\alpha_i.\]
\end{thm}

\subsection{Maximal volume tetrahedra}

On a given link complement in any of the cases we have discussed, both Thurston's octahedral construction at the crossings  and the bipyramid construction in the faces yield octahedra and bipyramids that decompose into generalized tetrahedra with two ideal vertices.

\begin{prop}\label{prop:maximalWedge}
Let $\Delta$ be a tetrahedron labelled as in Figure \ref{fig:LabelledTetrahedron} with $v_1$ and $v_2$ ideal vertices. For fixed dihedral angle $A \in [0, \pi]$ on the edge between $v_3$ and $v_4$, the maximal volume for such a tetrahedron is achieved when the other angles are
\[D = \arccos\left( \frac{1}{2}(\cos A - 1)\right)\]
\[B = C = E = F = \frac{\pi - D}{2}.\]
In this case $v_3$ and $v_4$ are ultra-ideal.
\end{prop}

\begin{proof}
The condition that $v_1$ and $v_2$ are ideal vertices imposes the constraints
\[B + F + D = C + E + D = \pi.\] 
We use Lagrange multipliers to maximize the volume subject to those constraints.
 Let 
\[\begin{array}{ll}
g_1(B, C, D, E, F) &= B+F+D-\pi \\
g_2(B, C, D, E, F) &= C+E+D-\pi
\end{array}\]
Then the method of Lagrange multipliers with multiple constraints makes us consider solutions of
\[\nabla_{B, \ldots, F, \lambda, \mu} ( \text{vol}(A, \ldots, F) - \lambda g_1(B, \ldots, F) - \mu g_2(B, \ldots, F) ) = 0\]
which yields

\[\p{\text{vol}}{B} = \p{\text{vol}}{F} = \lambda\]
\[\p{\text{vol}}{C} = \p{\text{vol}}{E} = \mu\]
\[\p{\text{vol}}{D} = \lambda + \mu\]
By Schl\"afli's differential formula, those equations become
\begin{align*}
\ell_B &= \ell_F \\
\ell_C &= \ell_E \\
\ell_D &= \ell_B + \ell_C.
\end{align*}
However, all these edges have at least one ideal endpoint, so the lengths are infinite. To recover useful information from Schl\"afli's formula, we replace the ideal vertices with finite vertices $\bar{v}_1$ and $\bar{v}_2$ and consider the limiting behavior as they approach the ideal points $v_1$ and $v_2$. That is, we require
\[\lim_{\bar{v}_2 \to v_2} (\ell_B - \ell_F) = \lim_{\bar{v}_1 \to v_1} (\ell_C - \ell_E) = \lim_{\substack{\bar{v}_1 \to v_1 \\ \bar{v}_2 \to v_2}} (\ell_D - \ell_B - \ell_C) = 0.\]

Up to isometries of $\mathbb{H}^3$ in the Poincar\'e ball model, we may let $v_1 = (0,0,-1)$, $v_2 = (0,0,1)$, and $v_4 = (r_4,0,0)$, as in Figure \ref{Fig:maximalwedge}. Then $v_3$ lies at some point
\[v_3 = (r_3 \cos D \sin\phi, r_3 \sin D \sin\phi, r_3 \cos\phi).\]

\begin{figure}[htbp]
\centering
\includegraphics[scale=.6]{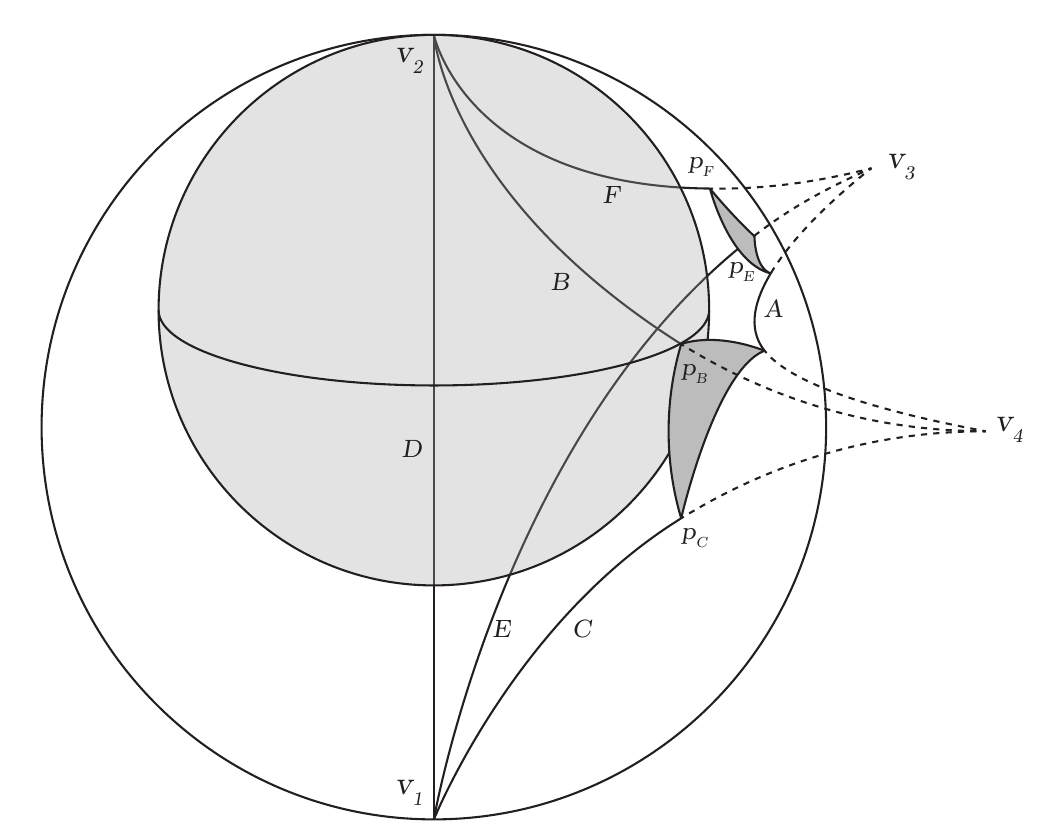}
\caption{A truncated tetrahedron as in Proposition \ref{prop:maximalWedge}.}
\label{Fig:maximalwedge}
\end{figure}

Let $p_B, p_C, p_E, p_F$ be the endpoints of edges B, C, E, and F respectively, opposite from $v_1$ and $v_2$. That is, if $v_4$ is finite or ideal then $p_B = p_C = v_4$, but if $v_4$ is ultra-ideal then $p_B$ and $p_C$ are the intersections of their respective edges with the truncation plane for $v_4$, and similarly for $p_E$ and $p_F$. The point $p_B$ lies on a unique horosphere centered at $v_2$, given by
\[x^2 + y^2 + (z - (1 - R_B))^2 = R_B^2\]
for some $R_B \in [0,1]$. By symmetry, $p_C$ lies on an opposite horosphere of the same radius centered at $v_1$. Similarly $p_E$ and $p_F$ lie on horospheres of radii $R_E$ and $R_F$ around $v_1$ and $v_2$ respectively. As $\bar{v}_1$ and $\bar{v}_2$ approach $v_1$ and $v_2$, $\ell_B - \ell_F$ limits to the finite distance between the concentric horospheres of radii $R_B$ and $R_F$, and similarly $\ell_C - \ell_E$ limits to the distance between concentric horospheres of radii $R_B$ and $R_E$. Furthermore, since we can split edge D at the origin, $\ell_D - \ell_B - \ell_C$ tends to twice the distance between concentric horospheres of radii $R_B$ and $\frac{1}{2}$. Thus the optimization conditions become
\[R_E = R_F = R_B = \frac{1}{2}.\]

This is satisfied if $v_3$ and $v_4$ lie at the origin, but then $\Delta$ is degenerate with volume 0.  Alternatively the truncation planes for $v_3$ and $v_4$ must both be mutually tangent to the two horospheres of radius $\frac{1}{2}$ centered at $v_1$ and $v_2$. Therefore $v_3$ must be equidistant from the Euclidean centers of the horospheres, which lie at  $(0, 0, \pm \frac{1}{2})$, so it must lie on the $xy$-plane. Thus $\phi = \pi/2$. At this point we can see by symmetry of the tetrahedron and the ideal vertex constraints that 
\[B = C = E = F = \frac{\pi - D}{2}.\]
The Euclidean radius of the truncation plane for $v_3$ is $\sqrt{r_3^2 - 1}$, so  by the Pythagorean theorem we have
\[\left(\sqrt{r_3^2 - 1} + \frac{1}{2}\right)^2 = r_3^2 + \left(\frac{1}{2}\right)^2\]
which implies $r_3 = \sqrt{2}$ and by the same reasoning, $r_4 = \sqrt{2}$. Thus, we see that $v_4 = (\sqrt{2},0,0)$ and $v_3 = (\sqrt{2}\cos D, \sqrt{2}\sin D, 0)$. 

Finally we compute an explicit relationship between $A$ and $D$. The top and bottom faces of this tetrahedron are given by
\[\left(x - \frac{1}{\sqrt{2}}\right)^2 + \left(y - \frac{1 - \cos D}{\sqrt{2}\sin D}\right)^2 + (z \pm 1)^2 = \frac{1 - \cos D}{\sin^2 D},\]
and the angle between them is
\[A = \arccos(1 + 2\cos D).\]
Inverting the function, we arrive at
\[D = \arccos\left(\frac{1}{2}(\cos A - 1)\right).\]

\end{proof}

\begin{coro}\label{v8/2ismaxwedge}
The maximal volume generalized tetrahedron with two ideal vertices has volume $v_\text{oct}/2$, with angles
\[A = 0, \quad D = \frac{\pi}{2}, \quad B = C = E = F = \frac{\pi}{4}.\]
\end{coro}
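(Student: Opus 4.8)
The plan is to use Lemma~\ref{maximalWedge} to collapse the problem to a single-variable optimization and then read off the sign of the derivative from Schl\"afli's formula. For each fixed $A \in [0,\pi]$, Lemma~\ref{maximalWedge} pins down the maximal-volume generalized tetrahedron with $v_1,v_2$ ideal; write $V(A)$ for this maximal volume, attained at the configuration with $D = \arccos\left(\tfrac12(\cos A - 1)\right)$, $B = C = E = F = \tfrac{\pi - D}{2}$, and $v_3, v_4$ ultra-ideal. Since the global maximum over \emph{all} generalized tetrahedra with two ideal vertices equals $\max_{A \in [0,\pi]} V(A)$, it suffices to show that $V$ is strictly decreasing and to evaluate $V(0)$.

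First I would differentiate $V$. The optimizer supplied by Lemma~\ref{maximalWedge} depends smoothly on $A$ through $D = D(A)$, so $V$ is smooth on $(0,\pi)$ and I can differentiate $V(A) = \text{vol}(A, B(A), \dots, F(A))$ along the family. The five angles $B,\dots,F$ move subject to the two ideal-vertex constraints $g_1 = B+F+D-\pi=0$ and $g_2 = C+E+D-\pi=0$, neither of which involves $A$. By an envelope argument --- concretely, because at the constrained maximum $\nabla_{B,\dots,F}\text{vol}$ is a combination $\lambda\nabla g_1 + \mu\nabla g_2$, while differentiating $g_1,g_2$ along the family forces $\nabla g_1,\nabla g_2$ to annihilate the velocity $(B'(A),\dots,F'(A))$ --- all contributions from the motion of $B,\dots,F$ cancel, leaving
\[ V'(A) = \p{\text{vol}}{A} = -\tfrac12\,\ell_A, \]
where the last equality is Schl\"afli's differential formula and $\ell_A$ is the length of the (truncated) edge between $v_3$ and $v_4$.

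It then remains to sign $\ell_A$. For $A \in (0,\pi)$ the vertices $v_3,v_4$ are ultra-ideal with disjoint canonical truncation planes (mild truncation), so the edge between them is the common perpendicular of those planes and has positive length; hence $V'(A) = -\tfrac12\,\ell_A < 0$ there, while $\ell_A \to 0$ as $A \to 0^+$ as the truncation planes become tangent. Thus $V$ is strictly decreasing on $[0,\pi]$, and its maximum is $V(0)$, attained at $A=0,\ D=\pi/2,\ B=C=E=F=\pi/4$, exactly the angles in the statement. Finally I would evaluate $V(0)$ by substituting these angles into the volume formula of the opening theorem (equivalently, recognizing the truncated solid as one half of a regular ideal octahedron), which gives $V(0) = v_\text{oct}/2$.

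The hard part will be making the envelope step airtight --- verifying that the first-order motion of the five optimized angles genuinely contributes nothing to $V'(A)$ --- together with confirming the strict positivity of $\ell_A$ on $(0,\pi)$ so that $V'$ has a definite sign and the maximum is forced to the endpoint $A=0$. Once monotonicity is in hand, the concluding identification $V(0) = v_\text{oct}/2$ is a direct computation rather than a conceptual obstacle.
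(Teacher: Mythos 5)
Your proposal is correct, and its core --- reducing to the one-parameter family of Lemma~\ref{maximalWedge} and using Schl\"afli to show the volume is decreasing in $A$, forcing the maximum to $A=0$ --- is the same as the paper's argument; your envelope computation is just a rigorous rendering of the paper's one-line claim that ``decreasing any one angle increases the volume.'' (In fact you can skip the envelope machinery: the constraints $B+F+D=\pi$ and $C+E+D=\pi$ do not involve $A$, so lowering $A$ with $B,\dots,F$ frozen stays in the constraint set and weakly increases volume since $\partial\,\text{vol}/\partial A = -\tfrac12\ell_A \le 0$; hence $V(A')\ge \text{vol}(A',B,\dots,F)\ge \text{vol}(A,B,\dots,F)=V(A)$ for $A'<A$.) Your positivity check on $\ell_A$ is also sound: in the lemma's normalization the Euclidean line through $v_3=(\sqrt2\cos D,\sqrt2\sin D,0)$ and $v_4=(\sqrt2,0,0)$ passes at distance $\sqrt{1+\cos D}<1$ from the origin when $D\in(\pi/2,\pi)$, so the truncation planes are disjoint and the truncated edge is their positive-length common perpendicular, becoming tangent exactly at $A=0$. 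Where you genuinely diverge from the paper is the final evaluation $V(0)=v_\text{oct}/2$. The paper glues four copies of the $A=0$ wedge around the central edge (dihedral angle $4\cdot\pi/2=2\pi$), then cuts the resulting polyhedron into a regular ideal octahedron plus eight tetrahedra that reassemble around the finite vertex into a second regular ideal octahedron; this yields the exact value with no analytic computation. Your primary route --- substituting $A=0$, $D=\pi/2$, $B=C=E=F=\pi/4$ into Ushijima's formula --- is legitimate (the $A=0$ solid is still mildly truncated, its tangent truncation planes meeting only at infinity, or one can pass to the limit $A\to 0^+$ by continuity), but it only becomes a proof of the \emph{exact} identity $V(0)=v_\text{oct}/2=4\Lambda(\pi/4)$ after nontrivial dilogarithm manipulations; a numerical match is not enough, so the paper's cut-and-reassemble argument is the cleaner closing move. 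One genuine error to flag: your parenthetical fallback, ``recognizing the truncated solid as one half of a regular ideal octahedron,'' is false as a literal isometry claim. The $A=0$ wedge has three ideal vertices, four finite vertices, and two right-angled truncation faces, and no planar bisection of the regular ideal octahedron produces a piece with that combinatorics; its volume merely \emph{equals} $v_\text{oct}/2$, which is exactly what must be proved --- and is why the paper needs four copies and a nontrivial dissection to exhibit the two octahedra.
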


\begin{proof}
We see immediately from the Schl\"afli formula that decreasing any one angle increases the volume. Thus among tetrahedra with two ideal vertices as described in Proposition \ref{prop:maximalWedge}, the maximal volume occurs when $A = 0$. The other angles follow from the formulas in the lemma. The volume of this tetrahedron is $v_\text{oct}/2$, which can be seen by gluing four copies around the central edge with angle $D = \pi/2$ to obtain the polyhedron in Figure \ref{maximaloct}. By chopping off eight tetrahedra from this polyhedron, each with one finite vertex and three ideal vertices, we are left with a single ideal regular octahedron. The eight tetrahedra we chopped off can be reassembled around the finite vertex to create a second ideal regular octahedron, hence the polyhedron has volume $2 v_\text{oct}$ and the original tetrahedron has volume $v_\text{oct}/2$.
\end{proof}


\begin{coro}\label{2v8ismaximaloct}
The maximal volume of a generalized hyperbolic octahedron with two opposite ideal vertices is $2v_\text{oct}$.
\end{coro}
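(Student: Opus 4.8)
The plan is to reduce the octahedron to the tetrahedral building block already analyzed in Corollary \ref{v8/2ismaxwedge}. Let $O$ be a generalized hyperbolic octahedron whose two opposite ideal vertices are $v_1$ and $v_2$ (the poles), and let $w_1, w_2, w_3, w_4$ be the four equatorial vertices in cyclic order, each of which may be finite, ideal, or ultra-ideal. The geodesic segment $v_1 v_2$ is an honest edge with both endpoints ideal, and the four totally geodesic half-planes spanned by this axis together with each $w_i$ cut $O$ into four generalized tetrahedra $T_i = \mathrm{conv}(v_1, v_2, w_i, w_{i+1})$ (indices mod $4$): the two ``outer'' faces of $T_i$ are genuine faces $v_1 w_i w_{i+1}$ and $v_2 w_i w_{i+1}$ of $O$, while the other two faces are the internal cuts through the axis. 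Since hyperbolic volume is additive over this decomposition, $\text{vol}(O) = \sum_{i=1}^{4}\text{vol}(T_i)$.

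Next I would apply the per-piece bound. Each $T_i$ has $v_1$ and $v_2$ as two ideal vertices, so Corollary \ref{v8/2ismaxwedge} gives $\text{vol}(T_i) \le v_\text{oct}/2$. Summing the four inequalities yields $\text{vol}(O) \le 4 \cdot v_\text{oct}/2 = 2 v_\text{oct}$, which is the desired upper bound.

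Finally, to see that the bound is attained, I would invoke the explicit construction from the proof of Corollary \ref{v8/2ismaxwedge}: four copies of the maximal tetrahedron (with $A = 0$, $D = \pi/2$, and $B = C = E = F = \pi/4$) glue around their common axis edge $v_1 v_2$, and their four central dihedral angles $D = \pi/2$ sum to exactly $2\pi$, so the identifications close up into a genuine generalized octahedron with two opposite ideal poles and four ultra-ideal (truncated) equatorial vertices. This is precisely the polyhedron of Figure \ref{maximaloct}, whose volume was computed there to be $2 v_\text{oct}$. Hence the supremum $2 v_\text{oct}$ is realized, completing the argument.

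The hard part will be the decomposition step rather than the optimization: one must verify that coning $O$ from the axis $v_1 v_2$ genuinely yields four \emph{mildly truncated} generalized tetrahedra and that volume adds correctly when the equatorial vertices are ultra-ideal, where the relevant objects are the truncated pieces. Convexity of $O$ together with the mild-truncation hypothesis — that truncation planes of distinct ultra-ideal vertices do not meet inside $\mathbb{H}^3$ — guarantees that the four cutting half-planes partition $O$ (and all of its truncation regions) without overlap, so additivity of volume holds. Once this is established, the remaining steps follow immediately from Corollary \ref{v8/2ismaxwedge}.
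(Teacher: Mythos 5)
Your proof is correct and takes essentially the same route as the paper: cut the octahedron into four tetrahedral wedges around the core line joining the two ideal vertices, bound each wedge by $v_\text{oct}/2$ via Corollary \ref{v8/2ismaxwedge}, and realize the bound by gluing four maximal wedges (whose angles $D = \pi/2$ sum to $2\pi$) into the polyhedron of Figure \ref{maximaloct}, which decomposes into two regular ideal octahedra. Your added care about mild truncation and volume additivity is a reasonable elaboration of a step the paper leaves implicit, but it does not change the argument.
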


\begin{proof}
As mentioned, a generalized hyperbolic octahedron with two opposite ideal vertices can be cut into four tetrahedral wedges around the core line connecting the ideal vertices. All four of these wedges may be the maximal tetrahedron described in Corollary \ref{v8/2ismaxwedge}, yielding the generalized octahedron as shown in Figure \ref{maximaloct}. By decomposing  as in Figure \ref{maxoctexploded} and then recomposing, we can turn this into two ideal regular octahedra with volume $2v_{oct}$.
\end{proof}

\begin{figure}[htbp]
\centering
\includegraphics[scale=0.3]{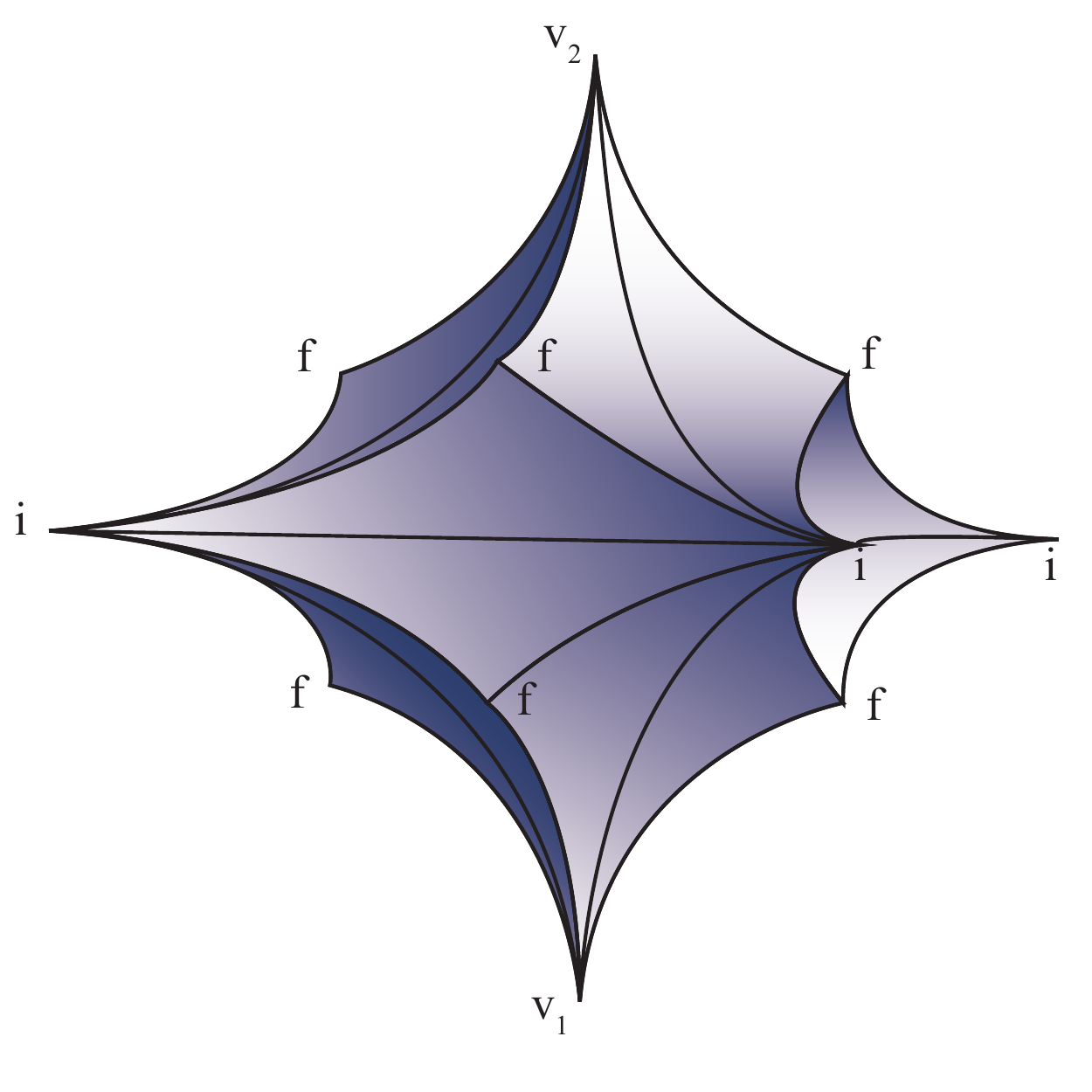}
\caption{The maximal volume generalized hyperbolic octahedron with two opposite ideal vertices. The other four vertices of the octahedron are ultra-ideal, with length 0 edges between the adjacent truncated faces, yielding additional ideal vertices, each labelled with an $i$. Vertices labelled with an $f$ are finite vertices on the truncated faces.}
\label{maximaloct}
\end{figure}

\begin{figure}[htbp]
\centering
\includegraphics[scale=0.3]{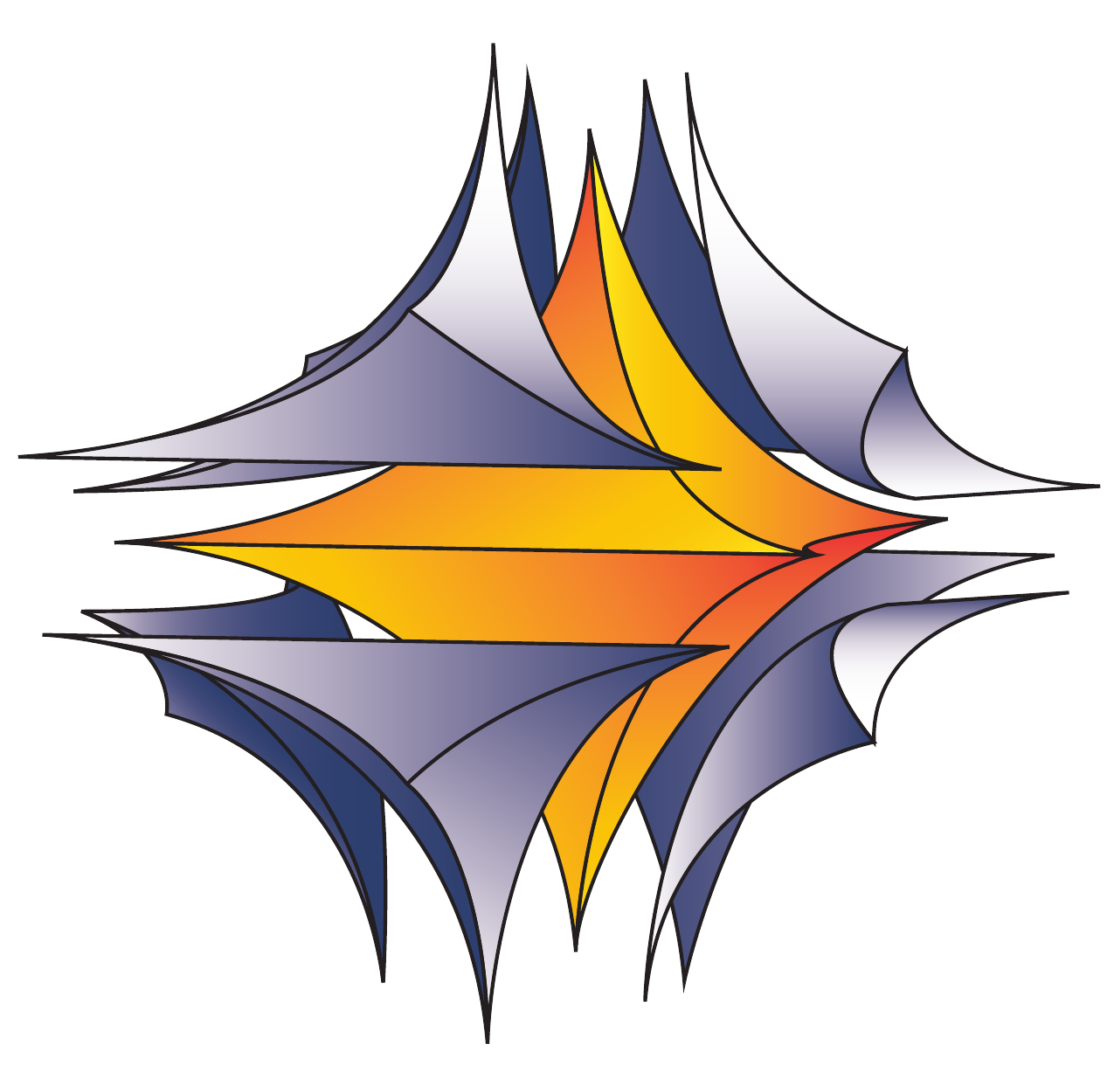}
\caption{The maximal volume generalized octahedron decomposes into one ideal regular octahedron and eight tetrahedra that recompose around their finite vertices into a single additional ideal regular octahedron.}
\label{maxoctexploded}
\end{figure}

From this bound together with the generalized octahedral decomposition described in Lemma \ref{lem:octdecomp}, we can immediately deduce

\begin{coro}\label{cor:octvolbd}
If $L$ is a hyperbolic link in $S_g \times I$,  where $g \geq 2$, then $vol(S_g \times I \setminus L ) \leq c(L) 2v_\text{oct}$.
\end{coro}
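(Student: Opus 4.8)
The plan is to transplant D. Thurston's octahedral decomposition \cite{Thurston2} into the thickened-surface setting and then bound each octahedron using Corollary \ref{2v8ismaximaloct}. First I would fix a projection of $L$ onto $S \times \{0\}$ with some number $c$ of crossings and place one topological octahedron at each crossing, exactly as in the $S^3$ case: the two opposite ideal vertices sit on the two strands of the link cusp at the crossing, and the four equatorial vertices are glued in pairs to the two points $U$ and $D$ lying above and below the projection surface. The combinatorics of the gluing are identical to the spherical case; the only thing that changes is the local geometry at $U$ and $D$.

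The central observation is that in $S \times I$ the regions above $S \times \{1\}$ and below $S \times \{0\}$ are not $3$-balls, so $U$ and $D$ cannot be coned off to honest finite vertices as they are in $S^3$. Instead I would identify $U$ and $D$ with the two totally geodesic boundary surfaces: under the complete structure furnished by Theorem \ref{uniquetotgeostructure}, the boundaries $S \times \{0\}$ and $S \times \{1\}$ are precisely the canonical truncation planes of ultra-ideal vertices located at $D$ and $U$. Consequently each octahedron of the decomposition is a generalized hyperbolic octahedron with two opposite ideal vertices (on the cusp) and four ultra-ideal equatorial vertices, two truncated along $S \times \{0\}$ and two along $S \times \{1\}$. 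This is exactly the class of octahedra analyzed in Corollary \ref{2v8ismaximaloct}, whose volume is at most $2v_\text{oct}$.

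Next I would invoke the standard straightening principle for such polyhedral decompositions, as used in \cite{Adams2015} and \cite{Champanerkar}: realize each topological octahedron geodesically in the complete hyperbolic structure so that its truncation planes lie along the geodesic boundary, observe that these straightened octahedra $O_1, \dots, O_c$ cover $M = S \times I - L$ with total algebraic volume equal to $\text{vol}(M)$, and deduce $\text{vol}(M) \le \sum_i \text{vol}(O_i)$. Since each $O_i$ is a generalized octahedron with two opposite ideal vertices, Corollary \ref{2v8ismaximaloct} gives $\text{vol}(O_i) \le 2v_\text{oct}$, so $\text{vol}(M) \le 2 v_\text{oct}\, c$ for any diagram with $c$ crossings. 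Choosing the projection to realize the minimal crossing number, which is attained by a reduced alternating diagram by \cite{AFLT02} and \cite{Fleming}, then yields $\text{vol}(S \times I - L) \le 2 v_\text{oct}\, c(L)$.

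The hard part will be the straightening step and the attendant volume inequality in the presence of ultra-ideal truncated vertices together with totally geodesic boundary. One must confirm that the combinatorial octahedral decomposition can be realized geodesically in a way compatible with the truncation, controlling possibly degenerate or negatively oriented straightened pieces so that the algebraic sum of their volumes is genuinely an upper bound for $\text{vol}(M)$ and no volume escapes across the truncation planes. Once the decomposition is seen to honestly split $M$ into generalized octahedra of the type in Corollary \ref{2v8ismaximaloct}, the per-piece bound finishes the argument.
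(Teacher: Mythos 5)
Your proposal is correct and is essentially the paper's own argument: the paper presents this corollary as an immediate consequence of transplanting Thurston's one-octahedron-per-crossing decomposition into $S \times I$, with $U$ and $D$ becoming ultra-ideal vertices truncated along the totally geodesic boundary components, so that each crossing octahedron is a generalized octahedron with two opposite ideal vertices bounded by $2v_\text{oct}$ via Corollary \ref{2v8ismaximaloct}, and the bound follows by summing over a minimal-crossing projection. The straightening step you single out as the hard part is precisely what the paper leaves implicit, so your write-up is, if anything, more careful than the original.
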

\begin{proof}
By Lemma \ref{lem:octdecomp}, $S_g \times I \setminus L$ has a decomposition into $c(L)$ generalized octahedra, which by Lemma \ref{cor:octvolbd} each have volume at most $2v_{\text{oct}}$.
\end{proof}

Proposition \ref{prop:maximalWedge} also allows us to determine the maximal volume bipyramids with ideal equatorial vertices and ultra-ideal apexes. 

\begin{coro}
\label{maximalBipyramid}
The maximal volume generalized $n$-bipyramid with ideal vertices around the central polygon is made of $n$ identical maximal volume tetrahedra of the type described in Proposition \ref{prop:maximalWedge}, with angles $A = 2\pi / n$.
\end{coro}

\begin{proof}
A generalized $n$-bipyramid with ideal vertices around the central polygon can be cut along its core line into tetrahedra $\Delta_1, ..., \Delta_n$, each with two ideal vertices. Label the edges of each as in Figure \ref{Fig:maximalwedge}, with subscripts to distinguish the tetrahedron to which they belong. The ideal vertices on each tetrahedron dictate the constraints
\[B_i + F_i + D_i = C_i + E_i + D_i = \pi,\]
and the fact that the tetrahedra are all glued together along the core line additionally requires
\[ \sum_{i = 1}^n A_i = 2\pi. \]
We again maximize using Lagrange multipliers and substitute from Schl\"afli's formula, obtaining
\begin{align*}
\ell_{A_i} &= \ell_{A_j} \\
\ell_{B_i} &= \ell_{F_i} \\
\ell_{C_i} &= \ell_{E_i} \\
\ell_{D_i} &= \ell_{B_i} + \ell_{C_i}
\end{align*}
for all $i, j$. By the same logic used in the proof of Proposition \ref{prop:maximalWedge}, the last three equations imply that $\Delta_i$ is isometric to a tetrahedron in the Poincar\'e ball model with vertices
\[v_1^i = (0,0,-1),\quad v_2^i = (0,0,1),\quad v_4^i = (\sqrt{2},0,0),\quad v_3^i = (\sqrt{2}\cos D_i, \sqrt{2} \sin D_i, 0). \]
In this model it is apparent that $\ell_{A_i}$ increases monotonically with $D_i$. Thus $\ell_{A_i} = \ell_{A_j}$ implies $D_i = D_j$, and consequently $A_i = A_j = 2\pi / n$. The other angles depend on $A_i$ as in the lemma because the tetrahedra are maximal.
\end{proof}

We refer to the bipyramids described in Corollary \ref{maximalBipyramid} as \emph{maximal doubly truncated $n$-bipyramids}, denoted $B_n^\text{trunc}$. Some volumes of these bipyramids are shown in Figure \ref{bipyramidvolumetable}. We note that the ratio
\[\frac{\text{vol}(B_n^\text{trunc})}{n}\]
 is strictly increasing with $n$, asymptotically approaching $v_\text{oct} / 2$ as the tetrahedral wedges making up the bipyramids approach the maximal wedge (with angle $A = 0$) discussed in Corollary \ref{v8/2ismaxwedge}. This contrasts with the case of ideal bipyramids, where 
\[\lim_{n \to \infty} \frac{\text{vol}(B_n^\text{ideal})} {n} = 0,\] 
peaking when $n = 6$ at $v_\text{tet}$, the volume of a regular ideal tetrahedron \cite{Adams2015}.

As in Corollary \ref{cor:octvolbd}, we can also obtain an upper bound on volume in terms of the bipyramidal decomposition of Lemma \ref{lem:bipdecomp}.

\begin{coro}\label{cor:bipvolbd}
If $L$ is a hyperbolic link in $S_g \times I$,  where $g \geq 2$, then 
\[vol(S_g, \times I \setminus L ) \leq \sum_{\{F_i\}} B_{n_i}^\text{trunc}\] where each maximal doubly truncated bipyramid corresponds to a unique non-bigon face $F_i$ with $n_i$ edges in the projection of $L$ to $S_g$.
\end{coro}
\begin{proof}
By Lemma \ref{lem:bipdecomp}, the link exterior has a decomposition into truncated bipyramids $B_{n_i}$, each of which corresponds to a non-bigon face. By Corollary \ref{maximalBipyramid}, each has volume at most $B_{n_i}^\text{trunc}$.
\end{proof}

\begin{figure}[ht]
\centering
\begin{subfigure}[b]{.48 \textwidth}
\centering
\includegraphics[scale=.5]{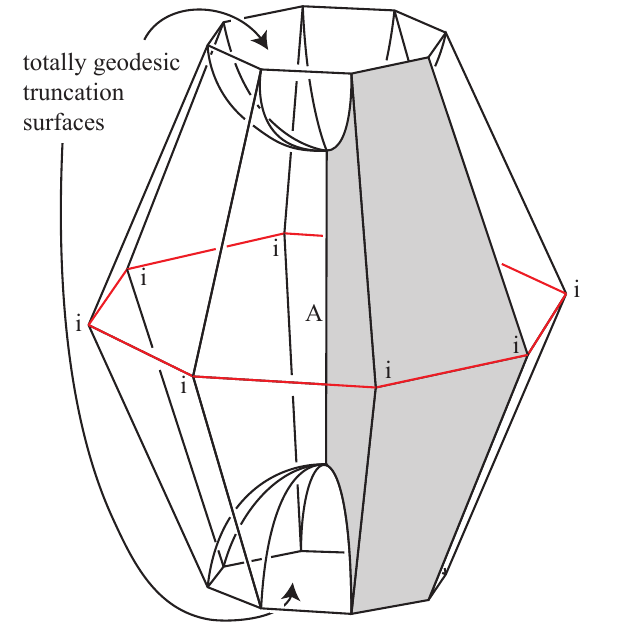}
\caption{A doubly truncated 8-bipyramid with angle $A=\frac{\pi}{4}$.}
\label{truncatedbipyramid}
\end{subfigure}
\begin{subfigure}[b]{.5 \textwidth}
\centering
\begin{tabular}{|| c | c ||}
\hline
$n$ & vol$(B_n^\text{trunc})$ \\
\hline \hline
2 & 0 \\
\hline
3 & 2.6667 \\
\hline
4 & 5.0747 \\
\hline
5 & 7.3015 \\
\hline
6 & 9.4158 \\
\hline
7 & 11.4580 \\
\hline
8 & 13.4520 \\
\hline
9 & 15.4122 \\
\hline
10 & 17.3481 \\
\hline
100 & 183.0944 \\
\hline
1000 & 1831.9213 \\
\hline
\end{tabular}
\caption{Volumes of maximal doubly truncated $n$-bipyramids.}
\label{bipyramidvolumetable}
\end{subfigure}
\caption{Maximal doubly truncated bipyramids.}
\end{figure}

\section{Computing volumes of tiling links}
\label{sec:volumes}

In this section we provide a method to compute the volumes for the exteriors of all alternating $k$--uniform tiling links (Theorem \ref{computingvolumes}). In order to describe the hyperbolic structure on these link complements, we first describe some basic building blocks of the appropriate geometric $n$-bipyramids, and explain how the symmetry of the tiling descends to symmetries of the bipyramids (Lemmas \ref{lem:eq_hyp}--\ref{lem:link_poly}).

To that end, define the wedge $\Delta_n(\theta)$ to be the hyperbolic tetrahedron with dihedral angles 
\begin{eqnarray}\label{eqn:bi_angles}
D=\pi - \theta \nonumber \\
B=C=E=F= \theta/2 \\
A = 2 \pi/n \nonumber
\end{eqnarray}
where the tetrahedron is labeled as in Figure \ref{fig:LabelledTetrahedron}.

We can determine the type (finite, ideal, ultra--ideal) of each of the vertices of $\Delta_n(\theta)$ from the sums of the dihedral angles of the edges meeting at the vertex. For example, since $B+C+D = E+F+D = \pi$, we know that the link of each of $v_1$ and $v_2$ is Euclidean, and therefore the vertices must both be ideal.

Similarly, we can determine the types of $v_3$ and $v_4$ from the value $ A + B+C$. If $A+B+C>\pi$, then the link around $v_4$ has positive curvature and hence $v_4$ must be finite (and since $A+B+C = A+E+F$, so must $v_3$). If $A+B+C=\pi$ (respectively $< \pi$) then the link of $v_4$ has zero (respectively negative) curvature, and so the vertex must be ideal (respectively ultra-ideal).

We can simplify this criterion using \eqref{eqn:bi_angles}: since $B+C = \theta$, we get that $v_3$ and $v_4$ are
\begin{equation}
\left. \begin{array}{r}
\text{finite} \\
\text{ideal} \\
\text{ultra-ideal}
\end{array} \right\}
\text{ if }
\left\{ \begin{array}{l}
\theta > A = (n-2)\pi/n \\
\theta = (n-2)\pi/n \\
\theta <  (n-2)\pi/n.
\end{array}\right. 
\end{equation}

Observe that $\Delta_n(\theta)$ admits a reflective symmetry that interchanges the $ACE$ and $ABF$ faces while fixing $A$ and reversing the orientation of $D$, so the $ACE$ and $ABF$ faces are isometric. Likewise, it admits a reflective symmetry that interchanges the $DBC$ and $DEF$ faces while reversing the orientation of $A$.

Gluing $n$ copies of $\Delta_n(\theta)$ together, $ACE$ face to $ABF$ face so the $A$ edges all are identified,  produces an $n$--bipyramid which we call the {\em symmetric $n$--bipyramid $B_n(\theta)$ of vertical angle $\theta$}. The {\it vertical edges} are defined to be  the edges that have an endpoint at either the  top or bottom apex, which can be finite, ideal or ultra-ideal. The remaining edges, always with both endpoints ideal, are called {\it equatorial edges} and their vertices are called {\it equatorial vertices}.

Since the fixed points of the $A$--reversing involutions of each wedge all coincide after the gluing, it is easy to observe the following.

\begin{lem}\label{lem:eq_hyp}
The equatorial vertices and edges of $B_n(\theta)$ all lie on the closure of a hyperplane.
\end{lem}
\begin{proof}
This follows from the symmetry of each of the wedges $\Delta_n(\theta)$, which fit together to exhibit a symmetry of the bipyramid itself. The fixed locus of a reflection in hyperbolic 3--space is a hyperbolic plane.
\end{proof}

We may also use the symmetries of $B_n(\theta)$ to describe the links of its vertices.

\begin{lem}\label{lem:link_rhomb}
Let $v$ be an equatorial vertex of $B_n(\theta)$ and $H$ be a small enough horosphere centered at $v$. Then $H \cap B_n(\theta)$ is a (Euclidean) rhombus with angles $\theta$ and $\pi - \theta$.
\end{lem}
\begin{proof}
Since four edges of $B_n(\theta)$ meet at each equatorial (ideal) vertex $v$, it is immediate that the link of the vertex is a (Euclidean) quadrilateral $Q$. Since the dihedral angles of each edge are specified in our construction of $B_n(\theta)$, we see that the angles of $Q$ are $\theta$, $\pi - \theta$, $\theta$, and $\pi - \theta$, as desired. Therefore $Q$ is a parallelogram.

Now observe that $B_n(\theta)$ has a symmetry which interchanges the two vertical edges incident to $v$ while fixing the equatorial, as well as a symmetry which fixes the vertical edges while interchanging the equatorial. These symmetries demonstrate that the symmetry group of $B_n(\theta)$ acts transitively on the edges of $Q$, so its edges all have the same length. Therefore $Q$ is a rhombus.
\end{proof}

\begin{lem}\label{lem:link_poly}
The link of either of the apex vertices of $B_n(\theta)$ is a regular $n$--gon with interior angles $\theta$.
\end{lem}
\begin{proof}
Since there are $n$ edges which meet at an apex, each with dihedral angle $\theta$, the link of an apex must be an $n$--gon $P$ whose interior angles are all $\theta$. As $B_n(\theta)$ is constructed from $n$ isometric wedges $\Delta_n(\theta)$, it has an order $n$ rotational symmetry and in particular its symmetry group acts transitively on the edges of $P$, thus $P$ is regular.
\end{proof}

Notice that when the apex of $B_n(\theta)$ is ultra-ideal (that is, when $\theta <  (n-2)\pi/n$) then there is a canonical geometric realization of its link,
namely, the intersection of the bipyramid with the unique truncation plane $\mathcal{P} \cong \mathbb{H}^2$ perpendicular to the $n$ edges ending at the apex.
Since there is at most one regular hyperbolic $n$--gon of given interior angle (up to isometry),
this implies that $n$ and $\theta$ completely determine the polygon $P = B_n(\theta) \cap \mathcal{P}$
(which forms the upper or lower face of the truncated bipyramid) up to isometry.

Similarly, suppose that the apex of $B_n(\theta)$ is finite (that is, $\theta > (n-2)\pi/n.$); then likewise there is a unique regular spherical $n$--gon of given interior angle (up to dilation and rotation of the sphere). Therefore for any $\epsilon>0$ such that the ball $B_\epsilon$ about the apex does not meet the equatorial hyperplane (Lemma \ref{lem:eq_hyp}), we see that the polygon $P_\varepsilon = B_n(\theta) \cap \partial B_\epsilon$ is uniquely determined by $n$, $\theta$, and $\varepsilon$.

We will now prove that the topological decomposition of $\Ext(L)$ into generalized bipyramids can be realized geometrically to obtain a hyperbolic structure on the complement.

\begin{thm}\label{computingvolumes}
Let $\TT$ be a $k$-uniform tiling of $\mathbb{X} = \mathbb{S}^2, \mathbb{E}^2$ or $\mathbb{H}^2$ by equilateral polygons that is either 3--regular,  or 4--regular, or is made up of 3-valent and 4-valent vertices such that there is a subcollection of edges that pair up the 3-valent vertices.  Let $\Gamma$ be a torsion--free subgroup of the symmetry group of $\TT$ with compact fundamental domain. Set $S = \mathbb{X} / \Gamma$ so that $S$ is tiled by tiles $\{F_i\}$, each a regular $n_i$--gon of angle $\alpha_i$. Let $L \subset S \times I$ be an alternating tiling link associated to $\TT$.

Then the complete hyperbolic structure on $\Ext(L)$ (with totally geodesic boundary when $\mathbb{X} = \mathbb{H}^2$)
may be obtained by gluing together symmetric hyperbolic bipyramids, each corresponding to a face of the tiling of $S$. That is,
\[\Ext(L) = \bigsqcup_{\{F_i\}} B_{n_i}(\alpha_i) / \sim\]
yields the complete hyperbolic structure on $\Ext(L)$.

\end{thm}

\begin{proof}
Note that the uniqueness statement implicit in the Theorem is a consequence of Theorem \ref{uniquetotgeostructure}.

By Lemma \ref{lem:bipdecomp},
we already know that $\Ext(L)$ can be topologically realized by gluing together bipyramids of the given combinatorial types (number of sides and finite/ideal/ultra-ideal) and that the links of the apexes fit together to give a (topological) tiling of the appropriate surface. To see that the hyperbolic structures given by $B_{n_i}(\alpha_i)$ glue together to give the complete hyperbolic structure on $\Ext(L)$ (with totally geodesic boundary when $\mathbb{X} = \mathbb{H}^2$), we will explicitly show that the bipyramids satisfy certain gluing conditions, which by the Poincar\'{e} Polyhedron Theorem will imply the result. (See \cite{Epstein1994} for instance).

In particular, we first demonstrate that the bipyramids $B_{n_i}(\alpha_i)$ glue together along isometric faces and that the dihedral angles around each edge fit together to yield a total angle of $2\pi$. Moreover, the links of the equatorial vertices fit together to induce a Euclidean structure on each of the cusps of $\Ext(L)$. Ultimately, this implies that the hyperbolic structure on $\Ext(L)$ obtained from gluing together the hyperbolic structures on the bipyramids $B_{n_i}(\alpha_i)$ is actually a complete hyperbolic structure (with totally geodesic boundary if appropriate), as in \cite{ThurstonNotes} \S 3.10.

\para{Isometric gluings} The fact that the bipyramids $B_{n_i}(\alpha_i)$ are always glued along isometric faces follows {\em a fortiori} from the statement below:

\begin{claim}
Any two faces of any two of the $B_{n_i}(\alpha_i)$ are isometric, so long as neither of the faces is the truncation face obtained by the truncation of an ultra-ideal vertex.
\end{claim}
\begin{proof}[Proof of Claim]
Suppose first that $\mathbb{X} = \mathbb{S}^2$. Choose some $\varepsilon$ small enough so that the $\varepsilon$ neighborhood $B_\varepsilon$ about any apex $v_i$ of any of the $B_{n_i}(\alpha_i)$ does not meet the equatorial plane (see Lemma \ref{lem:eq_hyp}). Then by the discussion following Lemma \ref{lem:link_poly}, we know that $\varepsilon$, $n_i$ and $\alpha_i$ together completely determine the polygon of intersection $P_{i, \varepsilon} := B_{n_i}(\alpha_i) \cap \partial B_\varepsilon$.

Using standard spherical trigonometry, the length of one edge of $P_{i, \varepsilon}$ is enough to determine the angle subtended by the sides of a face of $B_{n_i}(\alpha_i)$ which meet at $v$. In turn, this angle completely determines the isometry class of the face, which is a hyperbolic triangle with two ideal vertices and one finite vertex.

Now the angles $\alpha_i$ are chosen so that the spherical tiling $\TT$ is equilateral, and so we see that for any two given $B_{n_i}(\alpha_i)$ and $B_{n_j}(\alpha_j)$, the corresponding polygons $P_{i, \varepsilon}$ and $P_{j, \varepsilon}$ have sides of equal length. Therefore the angles subtended at the apexes $v_i$ and $v_j$ by the sides of a face are equal, hence the faces themselves are isometric.  See Figure \ref{finiteapexes}.

\begin{figure}[ht]
\centering
\includegraphics[scale=.6]{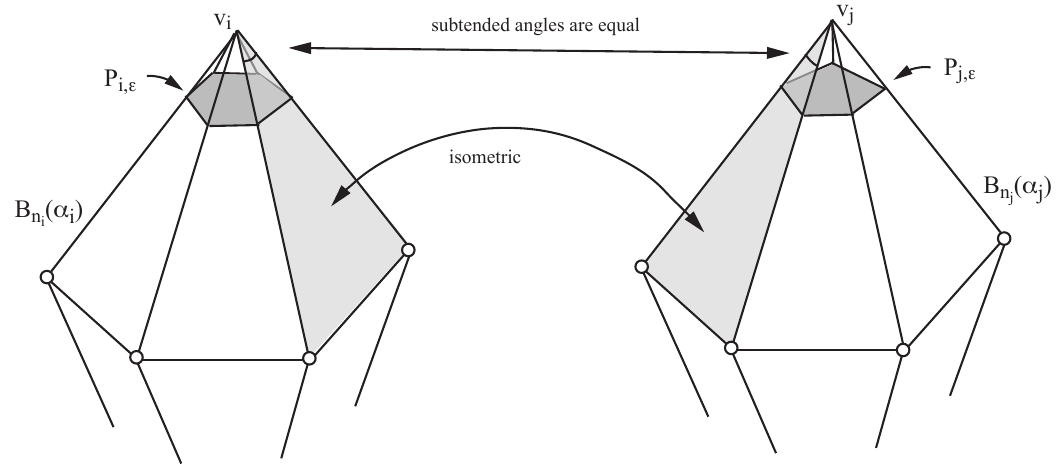}
\caption{Gluing isometric faces of bipyramids with finite apexes.}
\label{finiteapexes}
\end{figure}

If $\mathbb{X} = \mathbb{E}^2$, then the apexes of each of the bipyramids $B_{n_i}(\alpha_i)$ are ideal and hence the faces of the bipyramids are all ideal triangles. Every ideal triangle is isometric, and so the claim is proven.

Finally, when $\mathbb{X} = \mathbb{H}^2$ (and hence the apexes are all ultra-ideal), we recall from Lemma \ref{lem:link_poly} and the discussion which follows it that $n_i$ and $\alpha_i$ completely determine the isometry type of the polygons $P_i$ coming from the intersection of the bipyramids with their truncation planes.

Now the non--$P_i$ faces of each $B_{n_i}(\alpha_i)$ are hyperbolic quadrilaterals with angles $0, 0, \pi/2, \pi/2$ and one finite side, which it shares with a $P_i$ face. It is an easy exercise to show that the length of the finite side completely determines such a quadrilateral up to isometry, and so the lengths of the sides of the truncation polygons $P_i$ determine the isometry type of the non--$P_i$ faces. But now since the angles $\alpha_i$ are such that the tiling $\TT$ is equilateral, the side lengths of the $P_i$ are all the same, and hence the non--$P_i$ faces of the bipyramids $B_{n_i}(\alpha_i)$ are all isometric.
\end{proof}

\para{Angle sum, non-bigon case}
In order to verify the rest of the necessary gluing conditions, we first consider tilings that are 4--regular and therefore require no insertions of bigons.
Around a crossing, the edge labels on the adjacent bipyramids appear as in Figure \ref{bipyramidgluing}.

\begin{figure}[htbp]
\centering
\includegraphics[scale= 0.7]{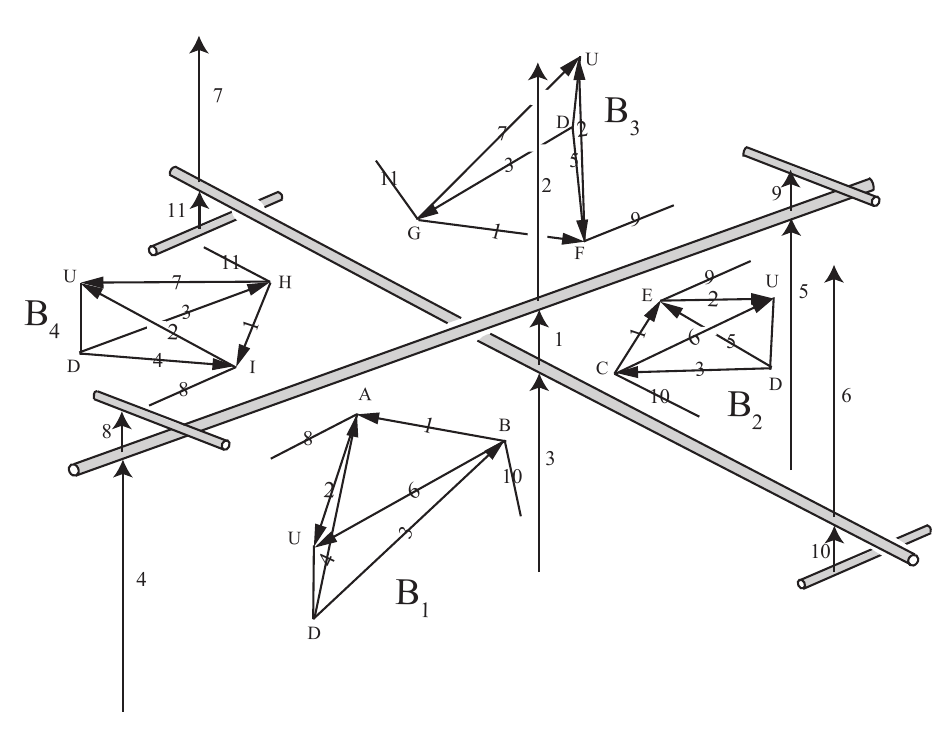}
\caption{Gluings of bipyramids around a crossing.}
\label{bipyramidgluing}
\end{figure} 

There are four bipyramids which contribute one edge each to each vertical edge class (labeled by $2$ and 3 in Figure \ref{bipyramidgluing}), and these four edges each have dihedral angles $\alpha_1, \alpha_2, \alpha_3, \alpha_4$, equal to the angle of the corresponding polygon's vertex in the (equilateral realization of the) tiling that generates $L$. In particular, since the links of the apexes fit together to form a tiling, we must have that $\alpha_1 + \alpha_2+ \alpha_3 + \alpha_4 = 2\pi$. Therefore the dihedral angles around any vertical edge class add up to $2\pi$ as desired and the links of the vertices at the top of edge 2 and bottom of edge 1 fit together correctly since they are part of the tiling from which the bipyramid dihedral angles came.

We now consider the equatorial edges.
Each edge class of equatorial edges corresponds to an edge in the link complement at one of the crossings, going from the undercrossing to the overcrossing (edge 1 in Figure \ref{bipyramidgluing}
). For each such edge class, there are again four bipyramids of dihedral angles $\beta_j$ for $j = 1, \ldots, 4$ which contribute one edge each. However, by construction of the $B_{n_i}(\alpha_i)$, we know that each $\beta_j = \pi - \alpha_j$ where $\alpha_j$ are the angles from the preceding paragraph. Therefore since $\alpha_1 + \alpha_2+ \alpha_3 + \alpha_4 = 2\pi$, we have that
\[\beta_1 + \beta_2 + \beta_3 + \beta_4 = (\pi - \alpha_1) + (\pi - \alpha_2)+(\pi - \alpha_3) + (\pi - \alpha_4) = 4 \pi - 2\pi = 2\pi\]
and so the sum of dihedral angles about any equatorial edge class is also $2\pi$.

Because we are using symmetric bipyramids, the link of each ideal equatorial vertex is a rhombus. Hence, when we glue the rhombi around a vertex corresponding to the crossection of an edge, the edge of the last rhombus matches in length with the length of the edge of the first rhombus, to which it is to be glued, as in Figure \ref{cuspnobigon}. In particular, this means there is no shearing along the edges as we generate isometires by gluing on subsequent bipyramids around the edge class. 

\para{Cusp structure, non-bigon case}
We now check that the gluings as described yield a complete structure on the cusps corresponding to the link complement. As mentioned, the link of each ideal equatorial vertex of a bipyramid is a rhombus, and those rhombi glue together around each vertex to yield exactly $2 \pi$ of angle. 

\begin{figure}[htbp]
\centering
\includegraphics[scale=0.5]{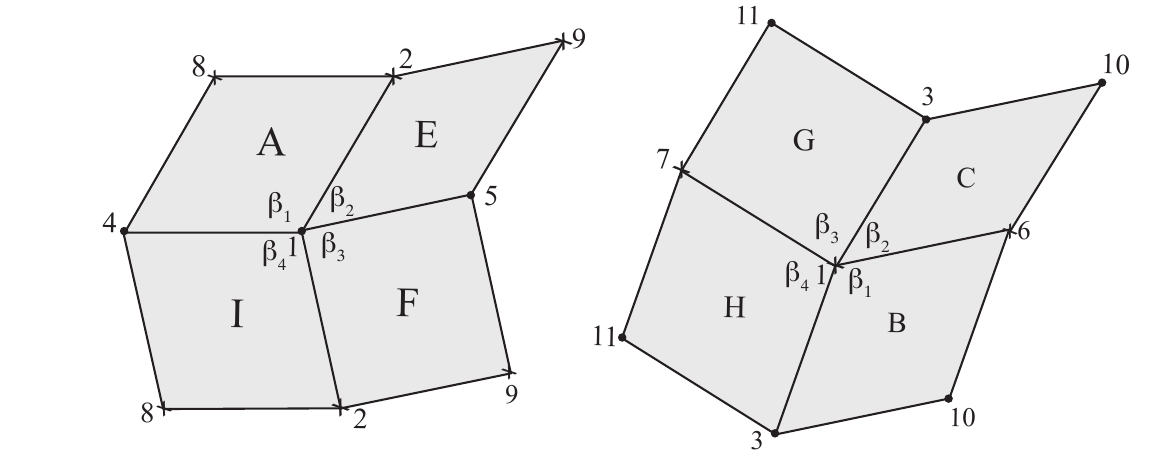}
\caption{Rhombi from each ideal vertex gluing together to create the cusp diagram at each crossing.}
\label{cuspnobigon}
\end{figure}

But then, as in Figure \ref{cuspnobigon}, the eight rhombi fit together in groups of four each around the top strand and around the bottom strand of the crossing. In each case, the top pair of edges of the resultant planar octagons glues to the bottom pair of edges by  Euclidean translation. This Euclidean transformation corresponds to the holonomy of the meridian of the link.  Since the rhombi fit together nicely around each vertex corresponding to the edges, we see that the holonomy of the longitude is also forced to correspond to a Euclidean transformation; this follows because we can take a quadrilateral that is tiled by copies of the rhombi such that it forms the basis for the developing map of the meridian and longitude actions. Once we know that the meridian corresponds to a translation, two opposite edges of the quadrilateral are parallel and of the same length. This forces the other two edges to have the same relationship and therefore the longitude is also a translation.  Hence, the cusp has a complete Euclidean structure. 
\bigskip

\para{Angle sum, bigon case}
We now consider the case when $\TT$ has some 3--valent vertices, with all such paired up by a subcollection of edges. In this situation we add bigon faces along each such edge. Call the two tiles meeting along the bigon edge $F_1$ and $F_3$, and let $F_2$ and $F_4$ denote the two tiles at the ends of the bigon edge. Since the sum of the angles around any vertex is $2\pi$, we know that the angles $\alpha_i$ of the $F_i$ satisfy the equations
\begin{align}\label{eqn:sum2pi_bigon}
\begin{split}
\alpha_1 + \alpha_2 + \alpha_3 & = 2\pi \\
\alpha_1 + \alpha_3 + \alpha_4 & = 2\pi
\end{split}
\end{align}
and hence $\alpha_2 = \alpha_4$.

For ease of notation, let $B_i$ denote the bipyramids $B_{n_i}(\alpha_i)$ corresponding to $F_1$ through $F_4$. By construction, the vertical edges of each $B_i$ have dihedral angles $\alpha_i$ and the equatorial edges have dihedral angles $\beta_i = \pi - \alpha_i$. Then it is immediate from \eqref{eqn:sum2pi_bigon} that the vertical edges glue up to provide an angle of $2\pi$ for the corresponding edge classes.

The equatorial edges which run from the undercrossings to the overcrossings at each end of the bigon are isotopic to one another, so we collapse them down to a single edge as in \cite{Menasco, AR}. The collapsed edge is labeled by 1 in Figure \ref{bigongluing}.

\begin{figure}[htbp]
\centering
\includegraphics[scale=0.5]{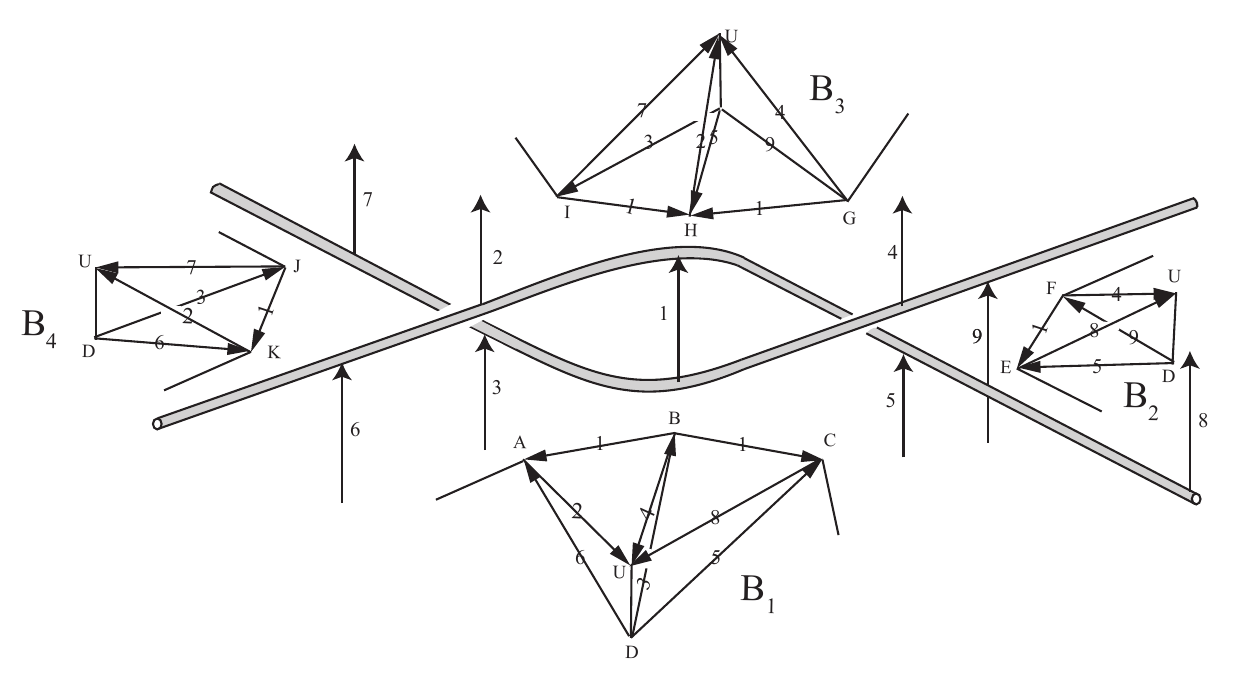}
\caption{The gluing of bipyramids around a bigon.}
\label{bigongluing}
\end{figure}

This means that $B_1$ and $B_3$ each contribute two equatorial edges to this edge class, and $B_2$ and $B_4$ each contribute one equatorial edge. Therefore the total dihedral angle around this edge class is
\[2 \beta_1 + 2 \beta_3 + \beta_2 + \beta_4 = 6 \pi - 2 \alpha_1 - 2 \alpha_3 - \alpha_2 - \alpha_4 = 6 \pi - 4 \pi = 2\pi,\]
where the first equation follows from the definition of the $\beta_i$ and the second follows from \eqref{eqn:sum2pi_bigon}. Therefore the sum of the dihedral angles about every edge is $2\pi$, as desired. Note in addition that since $\alpha_2 = \alpha_4$, we also have $\beta_1 + \beta_2 + \beta_3 = \pi$.

\para{Cusp structure, bigon case}
In order to show that the cusp tori inherit a Euclidean structure, we again consider how the links of equatorial vertices patch together. As in Figure \ref{cuspbigon},
we see how the rhombi corresponding to each of the equatorial vertices glue together to create the cusps.

\begin{figure}[htbp]
\centering
\includegraphics[scale=0.6]{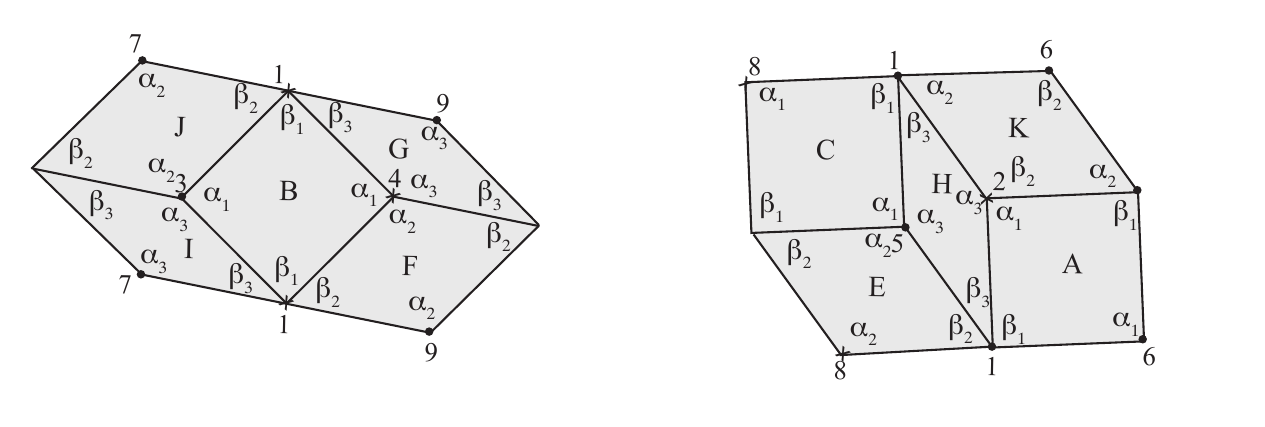}
\caption{Rhombi from each ideal vertex gluing together to create the cusp diagram near a bigon.}
\label{cuspbigon}
\end{figure}

Again, we see that around each vertex of the cusp tiling the rhombi glue with total angle $2\pi$ and that the top and bottom edges of the resulting figures glue to each other via translations, hence the holonomy of a meridian is a translation. Together, these imply as above that the holonomy of the longitude must also be a translation and we obtain a complete structure on the cusp $C$.

\para{Vertex gluings for $\mathbb{S}^2$ tilings}
We now consider the finite apexes in the case of $\mathbb{X} =\mathbb{S}^2$.  The face--pairing of the bipyramids immediately gives a hyperbolic structure to the space
\[ \bigcup B_{n_i}(\alpha_i) \setminus \{ \text{1--skeleton} \}\]
because there are natural hyperbolic charts around every point. In order to extend the hyperbolic structure to the entire space, we need only check that we can indeed glue the pieces of the hyperbolic charts around the edges and the finite vertices.

As we have seen above, the sum of the dihedral angles around each edge class is exactly $2 \pi$, and there is no shearing about an edge because we have always glued the faces by a unique isometry (more explicitly, observe that in any edge class the endpoints of the altitudes from the opposite vertex to the edge are always glued together, hence there is no shearing).

Around each finite vertex (that is, the two vertices coming from the apexes of the bipyramids), we have seen above that the link polygons fit together to give a tiling of the (round) sphere. Therefore the hyperbolic structures at the apexes fit together to give hyperbolic charts around the resulting finite vertices, and so we see that the hyperbolic structure on $\bigcup B_{n_i}(\alpha_i) \setminus \{ \text{1--skeleton} \}$ extends to $\bigcup B_{n_i}(\alpha_i) = \Ext(L)$.

\para{Boundary cusps for $\mathbb{E}^2$ tilings}
If $\mathbb{X} = \mathbb{E}^2$,  we have shown above that the links of the equatorial vertices induce Euclidean structures on each cusp coming from a strand of the link $L$. However, now that $\mathbb{X} = \mathbb{E}^2$ the link lives inside of a thickened torus,so we must show that the similarity structures on the cusps $T \times (0, \varepsilon)$ and $T \times (1- \varepsilon, 1)$ are indeed Euclidean structures. This in turn follows from our choice of the angles on the vertical edges of the bipyramids: the links of the vertices are regular polygons, all with the same edge length, that together tile the corresponding horosphere.

\para{Geodesic boundary for $\mathbb{H}^2$ tilings}
In the case when $\mathbb{X} = \mathbb{H}^2$, we actually take two copies of each $B_{n_i}(\alpha_i)$ and glue the copies together along their two truncation faces to induce a complete hyperbolic structure on the doubled link complement $D \Ext(L)$. By Theorem \ref{uniquetotgeostructure},
the two copies of $S$ corresponding to $S \times \partial I$ are totally geodesic in $D\Ext(L)$, so by cutting along these surfaces we arrive at a complete hyperbolic structure on $\Ext(L)$ whose boundary components are totally geodesic.
\end{proof}

Theorem \ref{computingvolumes} allows us to compute the volumes of alternating $k$-uniform tiling links by knowing the volumes of the corresponding symmetric $n$-bipyramids. For $g > 1$, we can use the  list of volumes of symmetric truncated $n$-bipyramids that appears in Figure \ref{bipyramidvolumetable}. For $g=1$,  there is a list of volumes of symmetric ideal $n$-bipyramids in \cite{Adams2015}, Table 1.

For $g =0$, as in the proof of Theorem \ref{computingvolumes}, splitting the bipyramidal decomposition of a spherical alternating $k$-uniform tiling link into top and bottom pyramids along their equatorial planes and gluing the tops and bottoms around the vertex links of $U$ and $D$ respectively, we recover Menasco's decomposition into two ideal polyhedra. Thus for an alternating link $L$ derived from a spherical $k$-uniform tiling we obtain a decomposition of $S^3 \setminus L$ into two copies of the polyhedron associated to the spherical tiling.

\begin{coro}
A alternating $k$-uniform tiling link $L \subset S^3$ corresponding to a spherical tiling has volume exactly twice that of the ideal hyperbolic polyhedron of the same combinatorial description with ideal vertices on the sphere at infinity placed according to the equilateral tiling on the sphere.
\end{coro}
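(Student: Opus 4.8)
The plan is to combine Theorem \ref{computingvolumes} with the cut-and-reassemble procedure sketched immediately above, and then add volumes. By Theorem \ref{computingvolumes}, $S^3 - L$ carries its complete hyperbolic structure realized by maximally symmetric face-centered bipyramids, one for each face of the tiling, with finite top and bottom apexes $U$ and $D$ and all equatorial vertices ideal. I would first cut each such $n$-bipyramid along its equatorial ideal $n$-gon into a top pyramid with apex $U$ and a bottom pyramid with apex $D$.

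The crux is identifying the polyhedron produced by regluing. Since the vertex links of $U$ and $D$ are entire spheres, as established in the spherical case of the proof of Theorem \ref{computingvolumes}, the top pyramids fit together around the single finite point $U$ so as to fill a neighborhood of it completely, making $U$ an interior point of a genuine ideal polyhedron $P_U$; the bottom pyramids likewise assemble into $P_D$ around $D$. I would then read off the combinatorics: the faces of $P_U$ are the bases of the top pyramids, that is, the faces of the tiling; its vertices are the equatorial vertices, located at the crossings, that is, the vertices of the tiling; and its edges are the equatorial edges, that is, the edges of the tiling. Thus $P_U$ has exactly the combinatorial type of the spherical $k$-uniform tiling, namely that of the associated Archimedean solid. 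This is precisely the observation that recovers Menasco's two-polyhedron decomposition.

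Next I would fix the geometry. The dihedral angles along the vertical edges were chosen in Theorem \ref{computingvolumes} to equal the polygon vertex angles of the equilateral realization of the tiling on the round sphere, and the links of $U$ and $D$ appear there as scaled copies of that equilateral tiling. Hence the cyclic pattern of face angles around each ideal vertex of $P_U$ is that of the equilateral tiling, so $P_U$ is isometric to the ideal hyperbolic polyhedron of the same combinatorial description whose ideal vertices on $\partial\mathbb{H}^3 = S^2$ are placed according to the equilateral spherical tiling. Because the two apexes sit symmetrically about the equatorial plane, reflection across that plane is an isometry of each bipyramid interchanging its halves, and these reflections assemble to an isometry $P_U \cong P_D$.

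Finally, the top and bottom pyramids together tile $S^3 - L$ by pieces with disjoint interiors, so volume is additive:
\[\text{vol}(S^3 - L) = \text{vol}(P_U) + \text{vol}(P_D) = 2\,\text{vol}(P_U),\]
with $P_U$ the asserted polyhedron. I expect the main obstacle to be the middle step: confirming that the reassembled object is an honest convex ideal polyhedron realized with the equilateral vertex placement, and not merely an abstract complex of the correct combinatorial type. This is exactly where the angle prescription of Theorem \ref{computingvolumes}, together with the fact that the vertex links of $U$ and $D$ form equilateral tilings of $S^2$, are indispensable.
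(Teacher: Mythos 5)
Your proposal is correct and follows essentially the same route as the paper: the authors likewise cut each face-centered bipyramid along its equatorial ideal polygon and reglue the top halves around $U$ and the bottom halves around $D$, recovering Menasco's decomposition into two isometric ideal polyhedra of the combinatorial type of the tiling, whence the volume is twice that of one polyhedron. Your additional care about why the reassembled pieces form honest ideal polyhedra with vertices placed by the equilateral tiling is a welcome elaboration of what the paper treats briefly, but it is not a different argument.
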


Note that this fact is implicit in the work of  \cite{AR},  where explicit hyperbolic structures for links corresponding to Archimedean solids were determined. In \cite{Rivin}, it is proved that a maximally symmetric hyperbolic polyhedron has maximal volume for the polyhedra of the same combinatorial type, so these polyhedra are maximal volume of the combinatorial type.

The volumes of these polyhedra are enumerated in Figure \ref{sphericaltilingstable} along with approximate angles (to two decimal places) of their associated spherical tilings.

\begin{figure}[htbp]
\centering
\renewcommand{\arraystretch}{1.5}
\begin{tabular}{|| c | c | c | c ||}
\hline
Solid & Vertex configuration & Angles & $\text{vol}(L) / 2$ \\
\hline
\hline
tetrahedron & 3.3.3 & $\frac{2\pi}{3} . \frac{2\pi}{3} . \frac{2\pi}{3}$ & $v_\text{tet} \approx 1.0149$ \\
\hline
octahedron & 3.3.3.3 & $\frac{\pi}{2} . \frac{\pi}{2} . \frac{\pi}{2} . \frac{\pi}{2}$ & $v_\text{oct} \approx 3.6639$ \\
\hline
cube & 4.4.4 & $\frac{2\pi}{3} . \frac{2\pi}{3} . \frac{2\pi}{3}$ & 5.0747 \\
\hline
dodecahedron & 5.5.5 & $\frac{2\pi}{3} . \frac{2\pi}{3} . \frac{2\pi}{3}$ & 20.5802 \\
\hline
truncated tetrahedron & 3.6.6 & (1.17).(2.56).(2.56) & 8.2957 \\
\hline
cuboctahedron & 3.4.3.4 & (1.23).(1.91).(1.23).(1.91) & 12.0461 \\
\hline
truncated cube & 3.8.8 & (1.10).(2.59).(2.59) & 20.8916 \\
\hline
truncated octahedron & 4.6.6 & (1.68).(2.30).(2.30) & 25.2238 \\
\hline
rhombicuboctahedron & 3.4.4.4 & (1.13).(1.72).(1.72).(1.72) & 31.6987 \\
\hline
truncated cuboctahedron & 4.6.8 & (1.62).(2.18).(2.48) & 57.2688 \\
\hline
icosidodecahedron & 3.5.3.5 & (1.11).(2.03).(1.11).(2.03) & 39.8793 \\
\hline
truncated dodecahedron & 3.10.10 & (1.06).(2.61).(2.61) & 61.5356 \\
\hline
truncated icosahedron & 5.6.6 & (1.94).(2.17).(2.17) & 77.7139 \\
\hline
rhombicosadodecahedron & 3.4.5.4 & (1.08).(1.62).(1.96).(1.62) & 92.7191 \\
\hline
truncated icosidodecahedron & 4.6.10 & (1.59).(2.13).(2.57) & 155.4566 \\
\hline
\end{tabular}
\caption{Volumes of maximal ideal Archimedean solids.}
\label{sphericaltilingstable}
\end{figure}

\section{Volume density in thickened surfaces}
\label{sec:density}

In addition to the exact computations appearing above, bipyramidal decompositions of tiling link complements can also be used to demonstrate independence of the crossing number and the volume of a tiling link. Recall that the {\em volume density} of a tiling link $L$ is $\mathcal{D}(L) = \text{vol}(L) / c(L)$, where $c(L)$ is the crossing number of $L$.
See Figure \ref{tilingstable} for some example volume density computations.

After developing some preliminary bounds on volume density (Lemmas \ref{lem:vol_dens_g1} and \ref{lem:vol_dens_g2}), we show in Theorem \ref{2voctdensity} that the set of all volume densities is dense in the interval $[0, 2v_{\text{oct}}]$. We conclude the section with some open questions about the finer structure of the set of volume densities.

\begin{figure}[ht]
\centering
\renewcommand{\arraystretch}{1.5}
\begin{subfigure}{.49 \textwidth}
\centering
\begin{tabular}{|| c | c | c  ||}
\hline
Tiling link  & $\mathcal{D}(L)$\\
\hline
\hline
\begin{tabular}{@{}c@{}}\includegraphics[scale=.5]{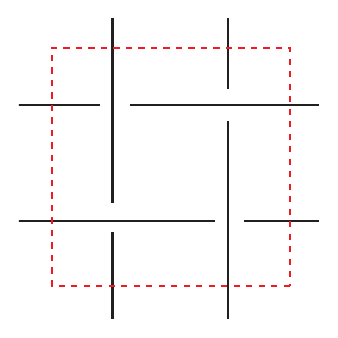} \\ 4.4.4.4 \end{tabular} & $v_\text{oct} \approx 3.6639$\\
\hline
\begin{tabular}{@{}c@{}}\includegraphics[scale=.5]{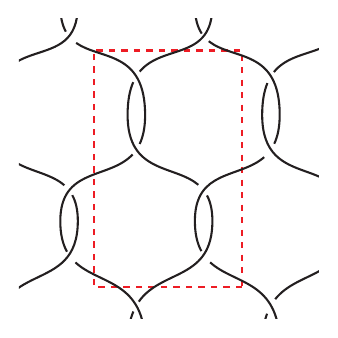} \\ 6.6.6 \end{tabular} & $3.0448$\\
\hline
\begin{tabular}{@{}c@{}}\includegraphics[scale=.5]{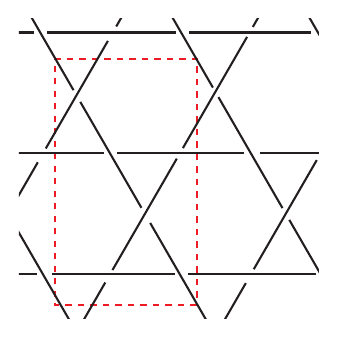} \\ 3.6.3.6 \end{tabular} & $3.3831$\\
\hline
\begin{tabular}{@{}c@{}}\includegraphics[scale=.5]{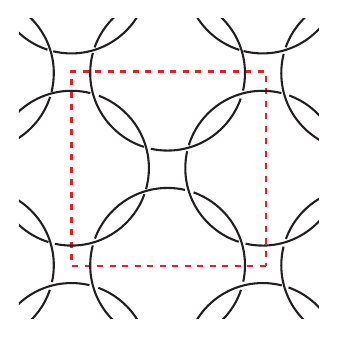} \\ 4.8.8 \end{tabular} & $2.8797$\\
\hline
\begin{tabular}{@{}c@{}}\includegraphics[scale=.5]{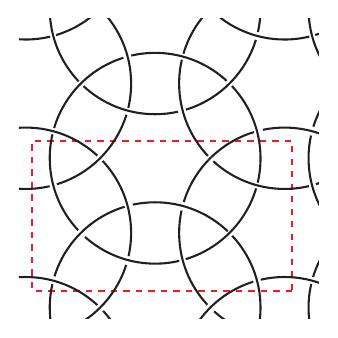} \\ 3.4.6.4 \end{tabular} & $3.5235$\\
\hline
\end{tabular}
\caption{Volume densities for Euclidean tiling links.}
\label{euclideantable}
\end{subfigure}
\begin{subfigure} {.5 \textwidth}
\centering
\begin{tabular}{|| c | c | c  ||}
\hline
Tiling link & Minimal genus & $\mathcal{D}(L)$\\
\hline
\hline
\begin{tabular}{@{}c@{}}\includegraphics[scale=.1]{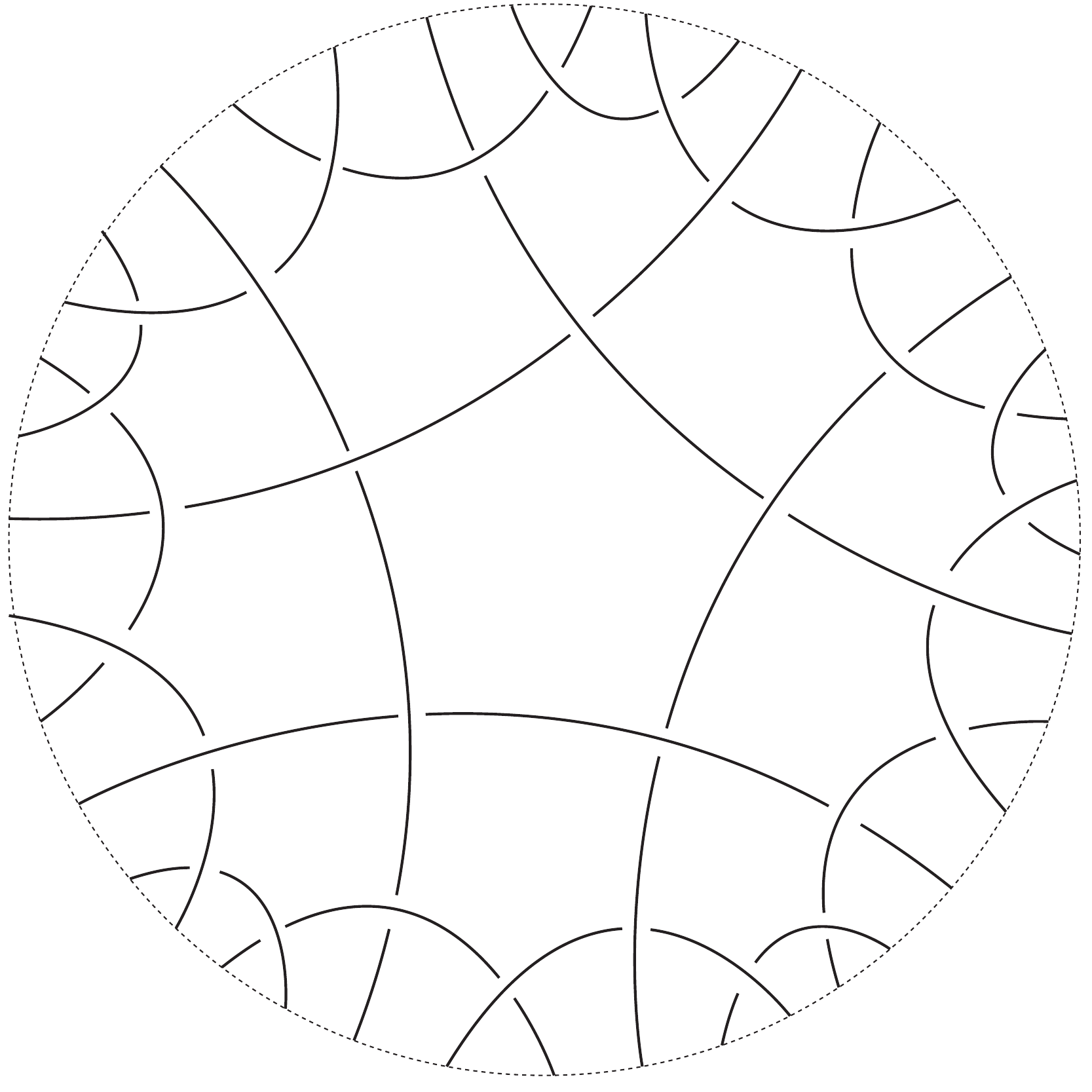} \\ 5.5.5.5 \end{tabular} & 2 & $5.4535$\\
\hline
\begin{tabular}{@{}c@{}}\includegraphics[scale=.1]{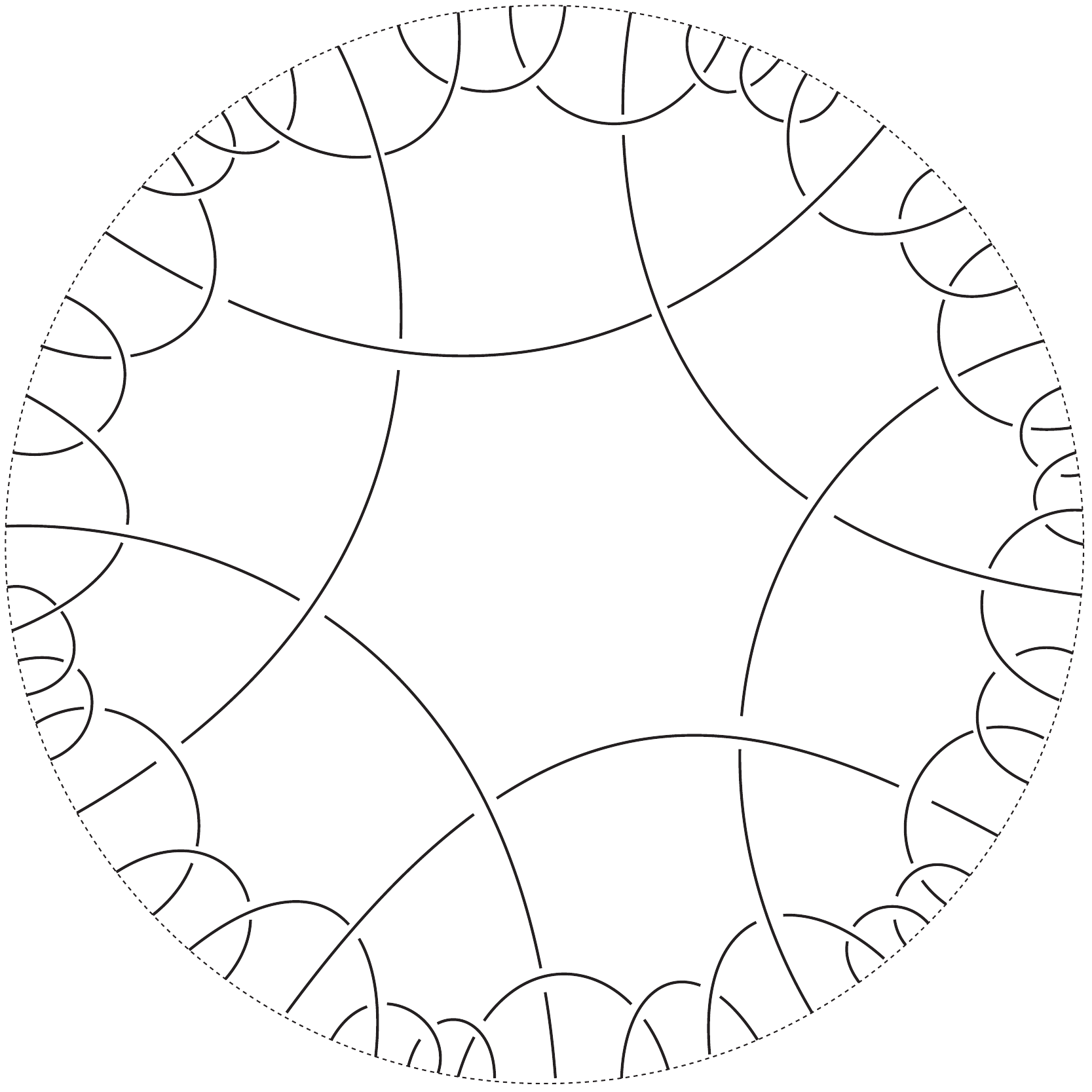} \\ 6.6.6.6 \end{tabular}  & 2 & $6.1064$\\
\hline
\begin{tabular}{@{}c@{}}\includegraphics[scale=.1]{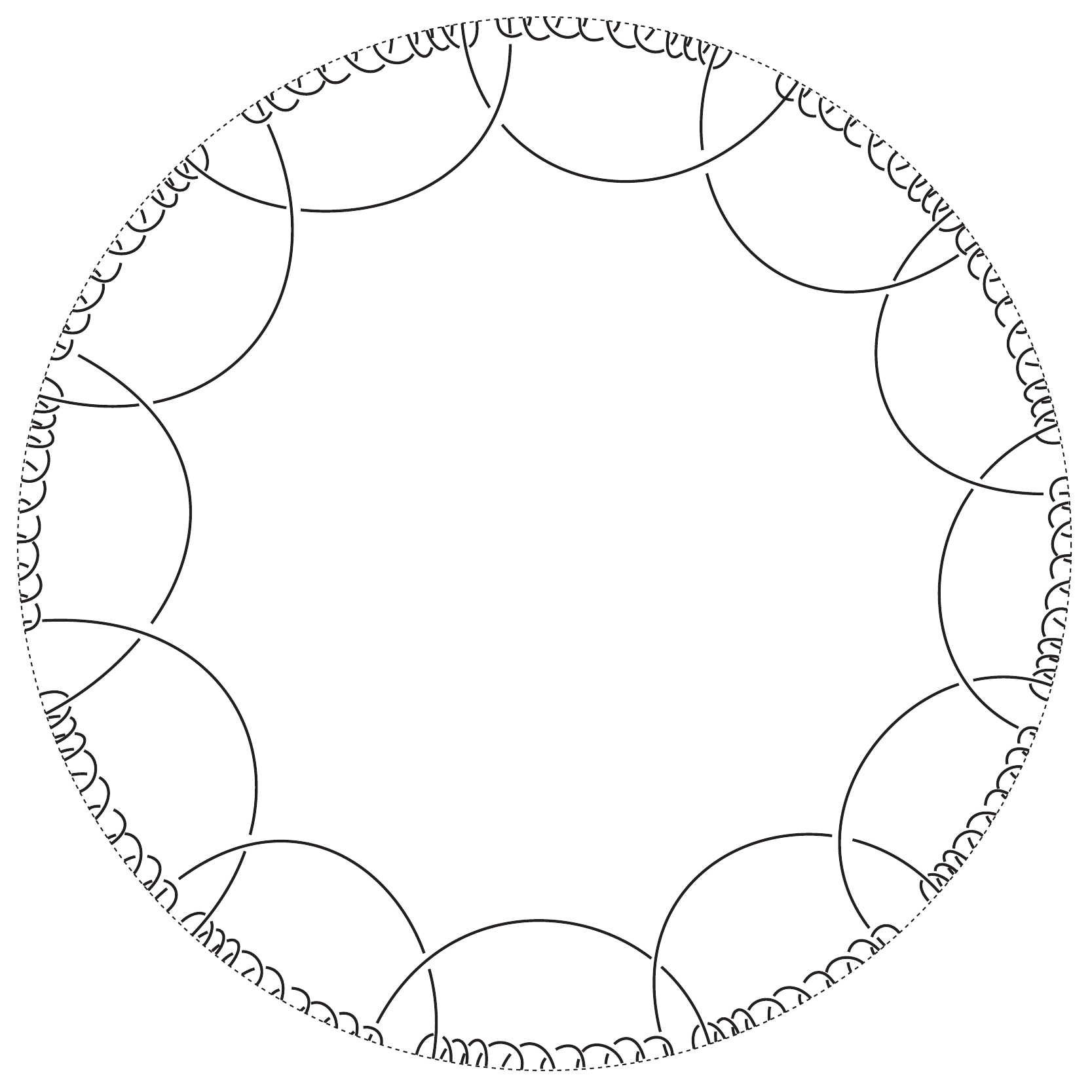} \\ 12.12.12.12 \end{tabular} & 2 & $7.0470$\\
\hline
\begin{tabular}{@{}c@{}}\includegraphics[scale=.1]{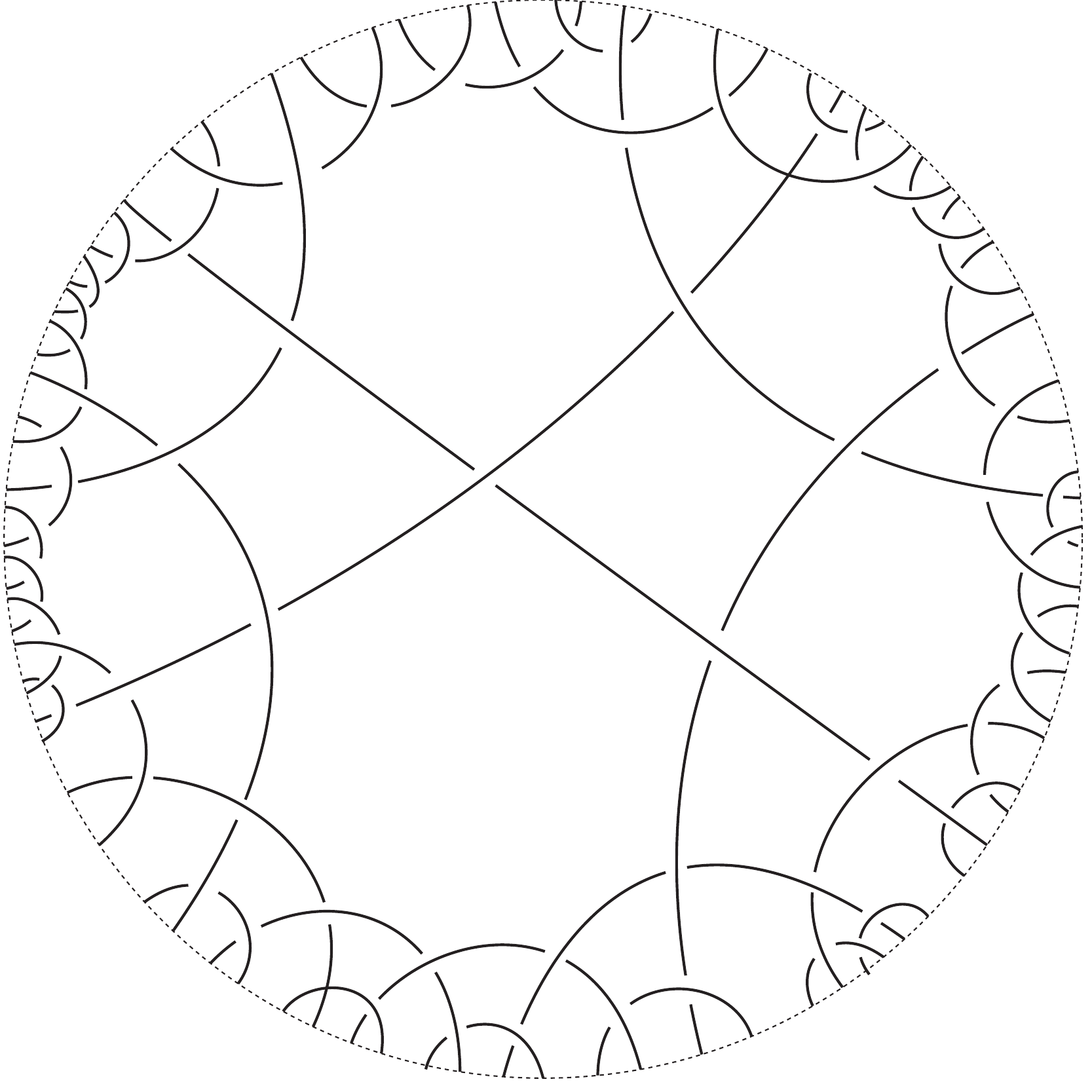} \\ 4.8.4.8 \end{tabular} & 2 & $5.4581$\\
\hline
\begin{tabular}{@{}c@{}}\includegraphics[scale=.1]{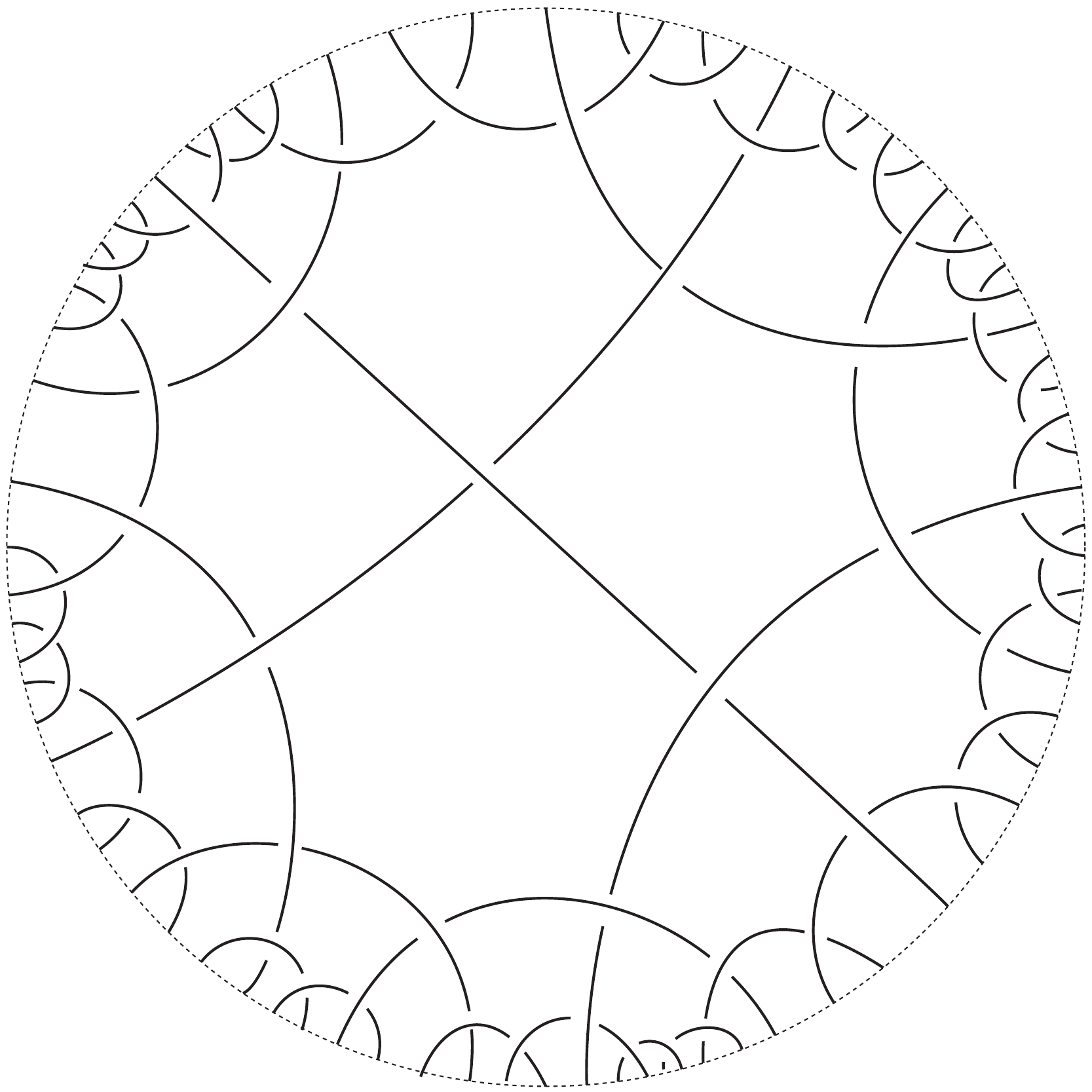} \\ 5.6.5.6 \end{tabular} & 3 & $5.7962$\\
\hline
\end{tabular}
\caption{Volume densities for hyperbolic tiling links.}
\label{hyptable}
\end{subfigure}
\caption{Volume densities for links in thickened surfaces.}
\label{tilingstable}
\end{figure}

We begin by observing that the generalized octahedral construction (Lemma \ref{lem:octdecomp}) with one octahedron per crossing and $U$ and $D$ ideal immediately yields the following.


\begin{lem}\label{lem:vol_dens_g1}
The volume density of a link $L$ in a thickened torus $T \times (0,1)$ is at most $v_\text{oct}$.
\end{lem}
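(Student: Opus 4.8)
The plan is to transport D. Thurston's octahedral decomposition, recalled in the introduction, to the thickened torus and to observe that the only structural change is that the apex vertices $U$ and $D$ become ideal. First I would fix a connected projection of $L$ onto $T \times \{1/2\}$ realizing the crossing number $c(L)$ and place one octahedron at each crossing, exactly as in Figure \ref{octahedratobipyramids}(a): two of its vertices lie on the link cusps meeting at that crossing, while the remaining four are identified in pairs to the points $U$ and $D$ lying respectively in the region above and the region below the projection surface.

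The key point is the nature of $U$ and $D$. In $S^3$ these are interior points of the manifold, hence finite vertices, and an octahedron with two ideal and four finite vertices has volume strictly less than $v_\text{oct}$. In $T \times (0,1)$, by contrast, the region above the projection torus is the end $T \times (1/2,1)$ and the region below is $T \times (0,1/2)$; each is a torus end of $T \times (0,1) - L$, which in the complete hyperbolic structure is a cusp. Thus $U$ and $D$ are ideal vertices, and every octahedron in the decomposition is a genuine ideal octahedron. This is the intermediate case between $S^3$, where $U,D$ are finite, and $S_g \times I$ with $g \ge 2$, where $U,D$ are ultra-ideal and the relevant bound jumps to $2v_\text{oct}$ by Corollary \ref{2v8ismaximaloct}.

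The decomposition so far is only a topological ideal cell decomposition, so to extract a volume bound I would run the standard straightening argument: subdivide each octahedron into ideal tetrahedra, straighten every tetrahedron to its geodesic representative, and use that the resulting (possibly degenerate or negatively oriented) geodesic pieces realize the manifold with total algebraic volume equal to $\text{vol}(T \times (0,1) - L)$. Since the volume of any ideal octahedron is at most $v_\text{oct}$, and there is exactly one octahedron per crossing, summing gives
\[ \text{vol}(T \times (0,1) - L) \le c(L)\, v_\text{oct}, \]
and dividing by $c(L)$ yields $\mathcal{D}(L) \le v_\text{oct}$. The hard part is purely bookkeeping: checking that Thurston's octahedral construction produces a valid decomposition for a diagram on the torus, and that the straightening and volume-comparison steps go through verbatim in the presence of cusps both at $U, D$ and at the link components. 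Once that is granted, the inequality is immediate, since sending the four vertices over $U$ and $D$ to infinity only relaxes the per-octahedron bound from ``$< v_\text{oct}$'' to ``$\le v_\text{oct}$.''
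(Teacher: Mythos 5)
Your proposal is correct and follows essentially the same route as the paper: the paper derives this lemma in one line by noting that the generalized octahedral construction with one octahedron per crossing, now with $U$ and $D$ ideal (since the regions above and below the projection torus are cusps of $T \times (0,1) - L$), immediately bounds the volume by $c(L)\,v_\text{oct}$. Your additional discussion of straightening and the comparison with the finite-vertex ($S^3$) and ultra-ideal ($S_g \times I$) cases is a faithful elaboration of what the paper leaves implicit.
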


The upper bound is realized by a link in $T \times (0,1)$ by taking the quotient of the infinite square weave link via a $\mathbb{Z} \times \mathbb{Z}$ subgroup of the symmetry group of the square tiling.

Corollary \ref{cor:octvolbd} and Theorem \ref{computingvolumes} now allow us to give equivalent results for $S \times I$.

\begin{lem}\label{lem:vol_dens_g2}
The volume density of a link $L$ in a thickened surface $S \times I$ is  strictly less than $2v_\text{oct}$.
\end{lem}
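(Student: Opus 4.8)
The plan is to run the generalized face-centered bipyramid decomposition with the apexes $U$ and $D$ taken to be ultra-ideal, and then convert the per-bipyramid volume estimates of Corollaries \ref{maximalBipyramid} and \ref{v8/2ismaxwedge} into the stated density bound by means of a single edge count. No new decomposition argument beyond the generalized octahedral/bipyramid bound recorded just above the lemma is required, so the work is essentially bookkeeping plus one point where the inequality becomes strict.

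First I would fix a reduced projection of $L$ onto $S \times \{0\}$ realizing the crossing number $c(L)$. This produces a $4$-regular graph $G$ on $S$ with exactly $c(L)$ vertices, one per crossing, and the bipyramid construction decomposes $S \times I - L$ into one generalized bipyramid per face of $G$: an $n$-bipyramid for each $n$-gon face, with ideal equatorial vertices and truncated apexes. By Corollary \ref{maximalBipyramid}, any such generalized $n$-bipyramid has volume at most $\text{vol}(B_n^\text{trunc})$, and $B_n^\text{trunc}$ is assembled from $n$ congruent maximal wedges of apex angle $A = 2\pi/n$. Writing $w(A)$ for the volume of the maximal wedge of Lemma \ref{maximalWedge} with fixed angle $A$, Corollary \ref{v8/2ismaxwedge} identifies the global maximum of $w$ over this family at $A = 0$, with $w(0) = v_\text{oct}/2$; since the Schl\"afli formula gives $dw/dA = -\tfrac12 \ell_A < 0$, the function $w$ is strictly decreasing, so $w(A) < v_\text{oct}/2$ for every $A > 0$. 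As $2\pi/n > 0$ for each finite $n$, this yields the strict per-face estimate
\[\text{vol}(B_n^\text{trunc}) = n\, w(2\pi/n) < n \cdot \frac{v_\text{oct}}{2}.\]

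Next I would sum over the faces $F_1, \dots, F_m$ of $G$, with $n_i$ denoting the number of edges of $F_i$, to get
\[\text{vol}(S \times I - L) \le \sum_{i=1}^m \text{vol}(B_{n_i}^\text{trunc}) < \frac{v_\text{oct}}{2}\sum_{i=1}^m n_i.\]
The final ingredient is the count $\sum_i n_i = 4c(L)$: since $G$ is $4$-regular on $c(L)$ vertices it has $2c(L)$ edges by the handshake lemma, and each edge borders exactly two faces, so $\sum_i n_i = 2\cdot 2c(L)$. Substituting gives $\text{vol}(S \times I - L) < 2 v_\text{oct}\, c(L)$, that is $\mathcal{D}(L) < 2 v_\text{oct}$.

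The genuine content is already carried by the maximal-wedge and maximal-bipyramid computations, so the only delicate points are (a) that the optimal apex angle $2\pi/n$ of each face is bounded away from $0$, which is exactly what forces strict inequality rather than the weak bound of the preceding corollary, and (b) the global bookkeeping $\sum_i n_i = 4c(L)$. The closest thing to an obstacle is verifying that the upper bound $\text{vol}(L) \le \sum_i \text{vol}(B_{n_i}^\text{trunc})$ is valid for an \emph{arbitrary} hyperbolic link in $S \times I$, not only for tiling links; but this is precisely the generalized bipyramid bound established above, where the ideal bipyramids $B_n^\text{ideal}$ are replaced by the maximal doubly truncated bipyramids $B_n^\text{trunc}$, so the estimate transfers verbatim.
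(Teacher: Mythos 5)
Your proposal is correct and is essentially the paper's own argument: the paper deduces this lemma in one line from the generalized face-centered bipyramid decomposition with ultra-ideal apexes together with Corollaries \ref{v8/2ismaxwedge} and \ref{maximalBipyramid}, which is exactly the route you take. Your write-up merely makes explicit the bookkeeping the paper leaves implicit, namely the strict per-face bound $\mathrm{vol}(B_n^\text{trunc}) < n\, v_\text{oct}/2$ (strict because each wedge angle $2\pi/n$ is positive) and the count $\sum_i n_i = 4c(L)$ for a $4$-regular projection graph.
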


\begin{lem}\label{maxSxIseq}
There exists a sequence of links $L_n$ in thickened surfaces $S_n \times I$ with volume densities approaching $2v_\text{oct}$.
\end{lem}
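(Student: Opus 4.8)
The plan is to realize $2v_\text{oct}$ as a limit using the right-angled regular tilings $\{m,4\}$ of $\mathbb{H}^2$, in which four regular $m$-gons, each with interior angle $\pi/2$, meet at every vertex; such a tiling exists for every $m \ge 5$. By Selberg's lemma each $\{m,4\}$ descends to a tiling of some closed surface $S_m$ of genus $\ge 2$, and the associated alternating tiling link $L_m \subset S_m \times I$ has a volume density intrinsic to the tiling (independent of the chosen quotient). A convenient choice takes $m = 8g-4$ together with the quotient having a single $(8g-4)$-gon face, giving $L_g \subset S_g \times I$; in any case I would let $m \to \infty$ and show $\mathcal{D}(L_m) \to 2v_\text{oct}$ from below.

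First I would assemble the geometry from Theorem \ref{computingvolumes}. Since $\{m,4\}$ is $4$-regular, no bigons are inserted, and the link complement decomposes into maximally symmetric $m$-bipyramids, one per face. Because the tiles are right-angled, every dihedral angle of each bipyramid equals $\pi/2$: the vertical edges inherit the interior angle $\alpha = \pi/2$, and the equatorial edges have $\beta = \pi - \alpha = \pi/2$ (the monohedral right-angled case noted after the theorem). For the combinatorics, each $m$-gon contributes $m$ corners and each degree-$4$ crossing is surrounded by $4$ corners, so there are exactly $m/4$ crossings per face. By the cited minimality of reduced alternating projections (\cite{AFLT02}, \cite{Fleming}) this count is the crossing number, and hence $\mathcal{D}(L_m) = \text{vol}(Q)/(m/4)$, where $Q$ is the single right-angled $m$-bipyramid.

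Next I would cut $Q$ along its core axis into $m$ congruent tetrahedral wedges $W_m$, each with two ideal equatorial vertices. By symmetry the core edge carries dihedral angle $A = 2\pi/m$, the equatorial edge carries $D = \beta = \pi/2$, and each vertical edge, being shared between two adjacent wedges, splits the angle $\alpha = \pi/2$ into $B = C = E = F = \pi/4$. Thus $\text{vol}(Q) = m\,\text{vol}(W_m)$ and $\mathcal{D}(L_m) = 4\,\text{vol}(W_m)$. As $m \to \infty$ the angle vector $(A,B,C,D,E,F)$ of $W_m$ converges to $(0,\tfrac{\pi}{4},\tfrac{\pi}{4},\tfrac{\pi}{2},\tfrac{\pi}{4},\tfrac{\pi}{4})$, which is precisely the maximal wedge of Corollary \ref{v8/2ismaxwedge} of volume $v_\text{oct}/2$. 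Moreover, keeping the other five angles fixed and decreasing $A$ increases volume by Schl\"afli's formula (the ideal-vertex constraints $B+F+D = C+E+D = \pi$ do not involve $A$, so this is a valid one-parameter family), so $\text{vol}(W_m)$ increases monotonically toward $v_\text{oct}/2$. Consequently $\mathcal{D}(L_m) = 4\,\text{vol}(W_m) \to 4 \cdot \tfrac{v_\text{oct}}{2} = 2v_\text{oct}$, the densities approaching the limit from below, consistent with the strict bound of the previous lemma. Equivalently, the four wedges around each crossing limit to a maximal octahedron of volume $2v_\text{oct}$, cf. Corollary \ref{2v8ismaximaloct}.

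The main obstacle is making this final limit rigorous at the degeneration $A \to 0$, where the core edge becomes ideal and $W_m$ lies on the boundary of the space of generalized tetrahedra. I must verify that $\text{vol}(W_m) \to v_\text{oct}/2$ \emph{continuously} rather than jumping: the monotonicity above shows the limit exists and is at most $v_\text{oct}/2$, and identifying it as exactly $v_\text{oct}/2$ amounts to continuity of Ushijima's volume formula (\cite{Ushijima2006}) as $A = 2\pi/m \to 0$, which requires controlling the dilogarithm terms and the roots $z_1, z_2$ near this boundary value. This is the one genuinely analytic step; the remaining ingredients, namely the existence of the $\{m,4\}$ quotients, the right-angled bipyramid shape, and the crossing count, are supplied respectively by Selberg's lemma, Theorem \ref{computingvolumes}, and the combinatorial count above.
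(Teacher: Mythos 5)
Your proposal is correct and follows essentially the same route as the paper: the paper also uses the $n.n.n.n$ (i.e.\ $\{n,4\}$) right-angled tilings, quotients to a closed surface, invokes Theorem \ref{computingvolumes} to get one right-angled $n$-bipyramid $B_n^\square$ per face with crossing number $nk/4$, splits each bipyramid into $n$ wedges with angles $A = 2\pi/n$, $D = \pi/2$, $B=C=E=F=\pi/4$, and lets these tend to the maximal wedge of volume $v_\text{oct}/2$ from Corollary \ref{v8/2ismaxwedge}. The analytic point you flag at $A \to 0$ is treated no more carefully in the paper (it appeals to the monotone-limit discussion following Corollary \ref{maximalBipyramid}), and your Schl\"afli monotonicity observation is the same justification used there.
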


\begin{proof}
For $n>5$, let $\widetilde{L_n}$ be an infinite alternating $k$-uniform tiling link derived from the $n.n.n.n$ tiling of $\mathbb{H}^2$ and $S_n$ be a closed surface such that its fundamental group is a subgroup of the symmetries of $\widetilde{L_n}$. Suppose a fundamental domain for $S_n$ is composed of $j$ $n$-gons, and let $L_n$ denote the tiling link in $S_n \times I$ after quotienting by the appropriate surface group. Note that there will be $j$ $n$-gons in a projection of $L_n$ onto $S_n \times \{0\}$. By Theorem \ref{computingvolumes} the volume of $(S_n \times I) \setminus L_n$ is $j\text{vol}(B_n^\square)$ where $B_n^\square$ denotes the generalized $n$-bipyramid with all dihedral angles equal to $\pi / 2$.
The crossing number of the resulting link $L_n$ in $S_n \times I$ is seen to be $nj/4$ by results in \cite{AFLT02} and \cite{Fleming}. Hence the volume density of $L_n$ in $S_n \times I$ is $4\text{vol}(B_n^\square)/{n}$. Observe that $B_n^\square$ breaks up into $n$ isometric generalized tetahedra with dihedral angles $A = 2\pi/n, D = \pi/2$ and $B=C=E=F=\pi/4$. Sending $n$ to infinity makes these wedges tend to the maximal tetrahedron from Corollary \ref{maximalBipyramid}, so just as with $B_n^{trunc}$, we see as in the discussion following Corollary \ref{maximalBipyramid},
\[\lim_{n \rightarrow \infty} \frac{4\text{vol}(B_n^\square)}{n} = 2v_\text{oct}.\]
\end{proof}

\begin{thm}\label{2voctdensity}
The set of volume densities of links in $S_g \times I$ across all genera $g \ge 2$, assuming totally geodesic boundary, is a dense subset of the interval $[0, 2v_\text{oct}]$.
\end{thm}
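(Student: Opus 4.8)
The plan is to show that for every target $y\in(0,2v_\text{oct})$ and every $\epsilon>0$ there is a link in some $S_g\times I$ with totally geodesic boundary whose volume density lies within $\epsilon$ of $y$; the two endpoints are then recovered as limits. The strategy combines two ingredients: a family of \emph{anchor} links of high and controllable total volume whose densities approach the top of the interval, and a \emph{dilution} operation that adds crossings while contributing only a bounded amount of volume, thereby sweeping the density downward through an arbitrarily fine mesh.

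For the anchors I would use the links $L_{n,k}$ underlying Lemma \ref{maxSxIseq}: a genus $g$ surface tiled by $k$ copies of a regular $n$-gon in the $n.n.n.n$ tiling yields a link with $c = nk/4$ crossings and, by Theorem \ref{computingvolumes}, volume $k\,\text{vol}(B_n^\square)$, hence density $f(n):=4\,\text{vol}(B_n^\square)/n$ independent of $k$. Since $f(n)\nearrow 2v_\text{oct}$, for any $y<2v_\text{oct}$ I can fix $n$ with $f(n)>y$, and then enlarge the fundamental domain (take $k$, and so $g$, large) so that $\text{vol}(L_{n,k})=k\,\text{vol}(B_n^\square)$ is as large as I like while the density stays pinned at $f(n)$.

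The dilution step inserts a twist region into a chosen anchor: pick two adjacent parallel strands and let $L^{(t)}$ be the result of adding $t$ crossings there, keeping the diagram alternating and the link inside a thickened surface. This is a $1/t$ Dehn filling on the crossing-circle cusp of the augmented link, so $\text{vol}(L^{(t)})$ should increase to a finite augmented volume $V_\infty> \text{vol}(L_{n,k})$, while $c(L^{(t)})=c+t$. Thus $e_t:=\text{vol}(L^{(t)})/(c+t)$ decreases from $f(n)$ toward $0$, and at the level $y$ — attained when $c+t\approx V_\infty/y$ — consecutive densities differ by about $V_\infty/(c+t)^2\approx y^2/V_\infty$. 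Choosing $k$ so that $y^2/\text{vol}(L_{n,k})<\epsilon$ forces this gap below $\epsilon$, so some $e_t$ lands within $\epsilon$ of $y$. Letting $t\to\infty$ produces densities tending to $0$, while the anchors already accumulate at $2v_\text{oct}$, so both endpoints lie in the closure.

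The main obstacle is making the dilution step rigorous in the totally geodesic boundary category. I must verify that the augmented link is itself hyperbolic with totally geodesic surface boundary and that the increasing, from-below convergence $\text{vol}(L^{(t)})\to V_\infty$ holds for links in $S\times I$, not merely in $S^3$. The natural route is to double across the two geodesic boundary surfaces as in Theorem \ref{uniquetotgeostructure}, obtaining a finite-volume cusped manifold to which Thurston's hyperbolic Dehn surgery theory applies, and then to descend the equivariant structures and halve volumes; checking geometric convergence equivariantly and that the filled manifolds retain totally geodesic boundary is the technical heart, whereas the arithmetic bounding $e_t-e_{t+1}$ near level $y$ is elementary. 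An alternative that avoids Dehn surgery is to interpolate entirely within tiling links, exploiting the additivity of volume and crossing number over the face-centered bipyramids in Theorem \ref{computingvolumes} to realize weighted averages of a high-density and a low-density tiling; there the obstacle instead becomes controlling how the equilateral-realization angles, and hence the bipyramid volumes, shift when two patterns are merged into a single $k$-uniform tiling.
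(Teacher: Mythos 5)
Your proposal is essentially the paper's own argument: the paper likewise takes a high-density anchor from a $q.q.q.q$ tiling (Lemma \ref{maxSxIseq}), scales its volume up (via cyclic covers rather than larger fundamental domains), then dilutes the density by adding a trivial component $J$ and performing $(1,t_n)$-Dehn filling to produce a long alternating twist region, with hyperbolicity from \cite{Aetal17} and the volume loss controlled by the Futer--Kalfagianni--Purcell bound \cite{Futer08}. The remaining differences --- your intermediate-value/mesh bookkeeping versus the paper's explicit formula for $t_n$, and your explicit flagging of the doubling step needed to run Dehn-surgery volume estimates in the totally geodesic boundary setting --- are matters of presentation, not of substance.
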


\begin{proof}
Let $x \in (0, 2v_\text{oct})$ and $\epsilon > 0.$ We will find a link $L$ such that its density is within $\epsilon$ of $x$.  By Lemma \ref{maxSxIseq} and its proof, there exists an alternating link $L$ in a thickened surface $S \times I$ coming from a $q.q.q.q$ tiling of $\mathbb{H}^2$ with $\text{vol}(L ) = k\text{vol}(B_q^\square)$, such that $\mathcal{D}(L) = \text{vol}(L)/c(L) > x$. 
By taking an $n$--fold cyclic cover of $(S \times I) \setminus L$, we obtain a link $L'$ in $S' \times I$, where $S'$ has higher genus than $S$, such that both the volume and crossing number of $L$ have been scaled up by  a factor of $n$.

As in Figure \ref{augment}(a) and (b), we choose two strands in the projection on the same complementary region and we add a trivial component $J$ so that the resulting projection of $L'' = L' \cup J$ is still alternating. By Theorem \ref{tg}, the complement of $L''$ in $S' \times I$ is hyperbolic. 

Drilling out $J$  raises the volume of $L'$ by some amount $b_n > 0$ which may depend on $n$ and the particular choice of drilling. 
We rearrange the projection as in Figure \ref{augment}(c). Then we perform $(1,t_n)$-Dehn filling on $J$, replacing the two original strands with a  region of $2 t_n+1$ crossings to create a link $L'''$, as for example appears in Figure \ref{augment}(d). The resulting link $L'''$ is alternating, and hence by Theorem \ref{tg} is also hyperbolic.

The volume of the filled link $L'''$ is at most $n\text{vol}(L) + b_n$, and the crossing number of $L'''$ is $nc(L) + 2t_n+ 1$ since a reduced alternating projection in $S \times I$ realizes the minimal crossing number (see \cite{AFLT02}). 

\begin{figure}[htbp]
\centering
\includegraphics[scale=0.8]{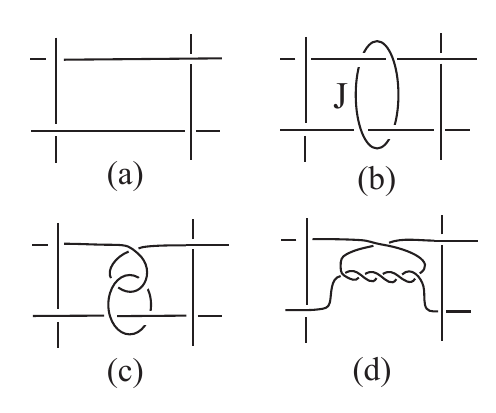}
\caption{Augmenting an alternating link in $S \times I$.}
\label{augment}
 \end{figure}

To show that the density of the resulting link $L'''$ is less than $x+\epsilon$, we set
\[t_n = \left\lfloor \frac{n\text{vol}(L) + b_n}{2x} \right\rfloor - \frac{nc(L)}{2}\]
for each $n$. From the discussion above, we can calculate that
\[\mathcal{D}(L''') \le \frac{n\text{vol}(L)+b_n}{nc(L)+2t_n + 1} 
= x+ \frac{n\text{vol}(L) + b_n - x \left( 2\left\lfloor \frac{n\text{vol}(L) + b_n}{2x} \right\rfloor +1 \right) }{2\left\lfloor \frac{n\text{vol}(L) + b_n}{2x} \right\rfloor +1}\]
and by increasing the numerator by $x$ and decreasing the denominator by $1$ on the right hand side we see
\[\mathcal{D}(L''') < x+ \frac{n\text{vol}(L) + b_n - 2x \left\lfloor \frac{n\text{vol}(L) + b_n}{2x} \right\rfloor }{2\left\lfloor \frac{n\text{vol}(L) + b_n}{2x} \right\rfloor} < x+ \frac{2x}{{2\left\lfloor \frac{n\text{vol}(L) + b_n}{2x} \right\rfloor} } \]
where the last inequality follows from the fact that for any $A, B >0$, 
\[A-B\left\lfloor \frac{A}{B} \right\rfloor = B\left( \frac{A}{B}-\left\lfloor \frac{A}{B} \right\rfloor \right) <B.\]
Now we can make the denominator on the right hand side arbitrarily large and so by setting $n$ large enough we get
\[\mathcal{D}(L''') < x+ \epsilon.\]

To prove that the volume density does not drop too much throughout this procedure, we employ a result of Futer, Kalfagianni, and Purcell \cite{Futer08}, which provides a lower bound for volume change under Dehn filling along a slope $s$:
\[\text{vol}(L''') \ge \left( 1 - \left( \frac{2\pi}{\ell} \right)^2 \right) ^{3/2} \text{vol}(L'')\]
where $\ell$ is the length of $s$. Since $|t_n|$ (and thus $\ell$) increases without bound as $n$ does, we can choose $n$ sufficiently large such that
\[\left( 1 - \left( \frac{2\pi}{\ell} \right)^2 \right) ^{3/2} > \frac{x}{x + \epsilon}.\]
Now observe that by our choice of $t_n$ we have
\[x(nc(L)+2t_n+1) = x \left( 2 \left\lfloor \frac{n\text{vol}(L)+b_n}{2x} \right\rfloor +1 \right) \le (n\text{vol}(L) + b_n) + x = \text{vol}(L'') +x \]
and hence we get that
\[\frac{\text{vol}(L'')}{nc(L)+2t_n+1} \ge x - \frac{x}{nc(L)+2t_n+1} > x - \frac{\epsilon}{x}\]
for large enough $n$. Therefore putting all of our estimates together, we see that for sufficiently large $n$,
\[\mathcal{D}(L''')
\ge \left( 1 - \left( \frac{2\pi}{\ell} \right)^2 \right) ^{3/2} \frac{\text{vol}(L'')}{nc(L)+2t_n+1}
\ge \left(\frac{x}{x + \epsilon} \right) \left( x - \frac{\epsilon}{x} \right) 
= \frac{x^2-\epsilon}{x+\epsilon}= x-\epsilon.\]
\end{proof}

Note that cyclic covers of tori remain tori and hence the following result can be deduced through arguments similar to those above.

\begin{coro}\label{TxIdensity}
The set of volume densities for links in $T \times (0,1)$ is a dense subset of the interval $[0, v_{oct}]$.
\end{coro}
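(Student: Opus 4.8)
The plan is to transplant the augment-and-fill argument of Theorem \ref{2voctdensity} into the thickened-torus setting, using as the starting ``high density'' link the square weave $\mathcal{W}$ quotiented to $T \times (0,1)$ rather than a hyperbolic tiling link. The key structural fact making this possible is exactly the one flagged in the preceding remark: a cyclic cover of a torus is again a torus, so each step of the construction keeps us inside the category of thickened tori. The upper bound is already in hand from the lemma asserting $\mathcal{D}(L) \le v_\text{oct}$ for links in $T \times (0,1)$, so the entire content is density from below.

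Concretely, fix $x \in (0, v_\text{oct})$ and $\epsilon > 0$. First I would take $L_0$ to be the alternating $k$-uniform tiling link derived from the $4.4.4.4$ tiling, living in $T \times (0,1)$; by the octahedral decomposition of \cite{Champanerkar} it has $\mathcal{D}(L_0) = v_\text{oct} > x$, playing the role that Lemma \ref{maxSxIseq} played in Theorem \ref{2voctdensity}. Next I would pass to an $n$-fold cyclic cover of $(T \times (0,1)) - L_0$ around one of the two homology directions of the torus; since such a cover of a thickened torus is again a thickened torus, this produces a link $L'$ in some $T \times (0,1)$ with both volume and crossing number multiplied by $n$, leaving the density unchanged. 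I would then augment by a trivial component $J$ lying in a single complementary region so that $L'' = L' \cup J$ is alternating with disk complementary regions, hence hyperbolic by \cite{Aetal17}, and perform $(1, t_n)$-Dehn filling on $J$ to replace two parallel strands by a twist region of $2t_n + 1$ crossings, yielding an alternating (hence hyperbolic) link $L'''$ still in $T \times (0,1)$.

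From this point the numerical estimates are identical to those in the proof of Theorem \ref{2voctdensity}, with the same choice
\[
t_n = \left\lfloor \frac{n\,\text{vol}(L_0) + b_n}{2x} \right\rfloor - \frac{n\,c(L_0)}{2},
\]
where $b_n > 0$ is the volume gained by drilling $J$. The upper estimate $\mathcal{D}(L''') < x + \epsilon$ follows from $\text{vol}(L''') \le n\,\text{vol}(L_0) + b_n$ together with the floor-function inequality, while the lower estimate $\mathcal{D}(L''') > x - \epsilon$ follows from the Futer--Kalfagianni--Purcell Dehn-filling bound \cite{Futer08}, using that the filling slope length grows without bound with $|t_n|$, exactly as before. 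The main obstacle is not analytic but bookkeeping: one must verify that every operation (cyclic cover, augmentation, Dehn filling) preserves the ambient manifold $T \times (0,1)$, and in particular that the cyclic cover around a handle of the torus returns a torus rather than raising genus as it did for $S_g$. Once this is confirmed, the estimates carry over verbatim and show that an arbitrary $x \in (0, v_\text{oct})$ is approximated to within $\epsilon$, establishing density in $[0, v_\text{oct}]$.
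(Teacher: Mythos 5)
Your proposal is correct and is exactly the argument the paper intends: the paper's own ``proof'' is the one-line remark preceding the corollary (cyclic covers of tori remain tori, so the argument of Theorem \ref{2voctdensity} carries over), and you have simply fleshed out that transplantation, starting from the square-weave quotient with density $v_\text{oct}$ and rerunning the augment-and-fill estimates verbatim. The only point worth double-checking is that the hyperbolicity result of \cite{Aetal17} applies to alternating links in the thickened torus (genus $1$), not just genus $\ge 2$; it does, so the argument goes through.
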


Taking cyclic covers of a surface of genus $g \ge 2$ increases the number of handles, and so it is natural to ask about the set of volume densities in a surface of a fixed genus. We define the genus of a non-orientable surface in terms of the Euler characteristic $\chi$ as $g = (2 - \chi)/2$, so that ``fixed genus'' directly translates to ``fixed Euler characteristic.''

Suppose we fix a surface $S_g$ of genus $g$. Then the maximal volume density of a link in $S_g \times I$ is achieved when the average volume per crossing is highest. As the average volume per crossing increases as we increase the number of edges in our bipyramid, we are trying to tile $S_g$ using as few polygons as possible with the largest number of sides. By Euler characteristic considerations, the minimum number of crossings comes from a tiling by a single $(8g-4)$-gon.

From that logic, we propose the following conjecture:

\begin{conj}
The maximum volume density for links in thickened surfaces of fixed genus $g \ge 2$ is given by
\[\beta_g:= \frac{\emph{vol}(B_{8g-4}^\square)}{2g-1}\]
and is realized by an alternating $k$-uniform tiling link derived from a tiling by right-angled $(8g-4)$-gons in a closed non-orientable surface of genus $g$.
\end{conj}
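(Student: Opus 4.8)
The plan is to prove two things separately: that $\beta_g$ is \emph{realized} by the stated link, and that it is an \emph{upper bound} for every link in $S_g \times I$. The realization is the short half. A regular right-angled $(8g-4)$-gon has area $(8g-6)\pi - (8g-4)\tfrac{\pi}{2} = 4\pi(g-1)$, which by Gauss--Bonnet is exactly the area of a closed genus-$g$ surface, so a single such polygon tessellates $S_g$ with four right-angled corners meeting at each vertex (the $\{8g-4,4\}$ tiling, realized on a non-orientable surface). This tiling is $4$-regular with $F=1$ face, $V = 2g-1$ vertices and $E = 4g-2$ edges. By Theorem \ref{computingvolumes} the associated alternating tiling link is hyperbolic with complete structure assembled from the single bipyramid $B_{8g-4}^\square$, and since $c(L)$ equals the number of vertices, its density is $\text{vol}(B_{8g-4}^\square)/(2g-1) = \beta_g$. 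I would first record this computation, including the verification that the required edge identifications of the single polygon can be arranged to produce a non-orientable surface of the prescribed genus.

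For the upper bound I would start from the face-centered bipyramid decomposition. For a reduced alternating projection onto $S_g$ the projection graph is $4$-regular, so $E = 2V$ and Euler's formula gives $F - V = 2 - 2g$, i.e. $V = F + 2g - 2 \ge 2g-1$, while $\sum_i n_i = 2E = 4V$, where $n_1,\dots,n_F$ are the face sizes and $c(L) = V$. Straightening each topological bipyramid to the maximal doubly truncated $B_{n_i}^\text{trunc}$ of Corollary \ref{maximalBipyramid} yields $\text{vol}(L) \le \sum_i \text{vol}(B_{n_i}^\text{trunc})$. Because $\text{vol}(B_n^\text{trunc})/n$ is strictly increasing, a convexity-and-exchange argument over integer face-size vectors subject to $\sum_i n_i = 4V$ and $F = V - (2-2g)$ shows that this sum, divided by $V$, is largest when $F=1$, i.e. for a single $(8g-4)$-gon. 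I would carry out this discrete optimization first, reducing everything to comparing one large face against configurations with more, smaller faces.

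The difficulty is that this bound is not tight: the maximizer $B_n^\text{trunc}$ has apex dihedral angle different from $\pi/2$, so its straightened value strictly exceeds $\beta_g$, and the argument above only yields $\text{vol}(L) \le \text{vol}(B_{8g-4}^\text{trunc})/(2g-1)$, a number slightly larger than $\beta_g$. The essential refinement is to incorporate the global constraint imposed by the two totally geodesic boundary surfaces. Each boundary is a closed genus-$g$ surface tiled by the apex truncation polygons, with four polygons meeting at each of the $V$ vertices, which forces $\sum_i n_i \alpha_i = 2\pi V$, where $\alpha_i$ is the apex dihedral angle of the $i$-th bipyramid; equivalently the total boundary area is pinned at $4\pi(g-1)$. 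The correct strategy is therefore to straighten not to the unconstrained maximizers but within the family $B_n(\alpha)$ of bipyramids with apex angle $\alpha$ and equatorial angle $\pi - \alpha$, and to maximize $\sum_i \text{vol}(B_{n_i}(\alpha_i))$ subject to $\sum_i n_i \alpha_i = 2\pi V$ together with the combinatorial relations. Applying Lagrange multipliers with the horosphere-renormalized Schl\"afli argument of Lemma \ref{maximalWedge} reduces the angle optimization to the condition that a fixed renormalized combination of edge lengths is constant across all bipyramids, which at the single-polygon configuration forces $\alpha = \pi/2$ and hence $B_{8g-4}^\square$.

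The main obstacle, and the reason this remains a conjecture, is twofold. First, one must prove that the constrained functional $\tfrac{1}{V}\sum_i \text{vol}(B_{n_i}(\alpha_i))$ is globally, not merely locally, maximized by the symmetric single-polygon configuration; this requires concavity of $\text{vol}(B_n(\alpha))$ in $\alpha$ along the constraint surface and simultaneous control of the continuous angles and the discrete combinatorial type $(F, \{n_i\})$. Second, and more seriously, one must justify that the honest hyperbolic volume of an arbitrary link is bounded by this \emph{constrained} maximum rather than only by the unconstrained straightening, i.e. that the true volume genuinely respects the boundary-area constraint rather than merely the looser per-bipyramid inequality. I expect the cleanest route is a Schl\"afli-type comparison showing that, among generalized bipyramid assemblies realizing the prescribed boundary tiling data, volume is maximized by the maximally symmetric equilateral realization; establishing this is exactly what closes the residual gap between $\text{vol}(B_{8g-4}^\text{trunc})$ and $\text{vol}(B_{8g-4}^\square)$ and pins the maximum density at $\beta_g$.
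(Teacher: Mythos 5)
First, a point of categorization: this statement is a \emph{conjecture} in the paper. The authors give no proof, only the heuristic that volume per crossing grows with face size, so one should tile $S_g$ by the fewest, largest faces, which by an Euler characteristic count means a single right-angled $(8g-4)$-gon. There is therefore no paper proof to compare yours against, and your proposal --- as you yourself say in the final paragraph --- is a program, not a proof. Your diagnosis of the obstruction in the upper-bound half is accurate, and it is precisely why the statement is left open: straightening each face to the maximal doubly truncated bipyramid $B_n^\text{trunc}$ of Corollary \ref{maximalBipyramid} gives an upper bound that strictly exceeds $\beta_g$, because $B_{8g-4}^\text{trunc}$ has apex angles different from $\pi/2$ and hence volume larger than $\text{vol}(B_{8g-4}^\square)$. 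Your proposed fix --- optimize only over bipyramids compatible with the fixed area $4\pi(g-1)$ of the two totally geodesic boundary surfaces, via a constrained Schl\"afli--Lagrange argument --- is sensible, but both of the points you flag (global rather than merely local maximality across all combinatorial types, and the claim that the true hyperbolic volume is bounded by the \emph{constrained} rather than the unconstrained straightening) are genuinely open, so the proposal does not close the conjecture.

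Second, there is a concrete gap in the half you call short. A tiling of a closed surface by a single polygon has $F=1$, so every edge has the same face on both sides and the tiling admits no checkerboard coloring. The paper's construction of alternating tiling links in Section 4 requires checkerboard shading, and conversely an alternating diagram on an \emph{orientable} surface induces a checkerboard coloring; so on an orientable $S_g$ no alternating link has this one-face projection, and the paper's construction would instead pass to a cover, changing the genus and destroying the count $c(L)=2g-1$. This is exactly why the conjecture specifies a non-orientable surface: there the over/under corner labels are twisted by the orientation class, and a one-face diagram can be alternating precisely when the mod $2$ intersection with the projection graph agrees with $w_1$, i.e.\ when the curves crossing the graph an odd number of times are exactly the orientation-reversing ones. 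Verifying that the edge identifications of the $(8g-4)$-gon can be chosen so that the quotient is a closed surface with $\chi = 2-2g$, the right angles meet four to a vertex, and this twisted colorability condition holds is the real content of the realization half; your proposal defers it as a routine verification and never engages with the alternating/colorability issue, without which Theorem \ref{computingvolumes} cannot be invoked at all, since that theorem applies only to alternating tiling links.
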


As the proof of Theorem \ref{2voctdensity} involves increases in  genus, it cannot be employed to show density of volume densities in $[0, \beta_g]$. However, augmentation and taking a belted sum with links in $S^3$ leaves genus intact, and thus the set of volume densities of links in $S_g \times I$ can be shown to be dense over at least the interval $[0, v_{oct}]$ by employing results of, e.g., \cite{Burton2015} and \cite{Adams2017}.




We conclude with a few open questions.

\begin{ques}
Is the set of volume densities of links in $S_g \times I$ for fixed genus $g \geq 2$ dense in $[0,\beta_g]$?
\end{ques}

\begin{ques}
Can bipyramidal decompositions be used to compute the volume of non-alternating tiling links?
\end{ques}

\begin{ques}
Can bipyramidal decompositions be used to compute the volumes of links derived from non-$k$-uniform tilings?
\end{ques}

In both of Questions 2 and 3, the bipyramids will not remain regular, and some skewing factor should be taken into account to determine the complete hyperbolic structure and calculate exact volumes. But bipyramids can substantially cut down the number of equations produced as compared to when one tries to find the hyperbolic structure by cutting into ideal tetrahedra and satisfying the gluing and completeness equations.

\
\bibliographystyle{amsalpha}
\bibliography{SMALL}

\end{document}